\newtheorem{thm}{Theorem}[section]
\newtheorem{remark}[thm]{Remark}
\newtheorem{lem}[thm]{Lemma}
\newtheorem{prop}[thm]{Proposition}
\newtheorem{coro}[thm]{Corollary}
\newcommand\independent{\protect\mathpalette{\protect\independent}{\perp}} 
\def\independent#1#2{\mathrel{\rlap{$#1#2$}\mkern2mu{#1#2}}}
\newcommand{\supp}{\mathrm{Supp}}
\newcommand{\R}{\mathbb{R}}
\DeclareMathOperator{\Var}{Var}
\newcommand{\vol}{\mathrm{Vol}}
\newcommand{\Z}{\mathbb{Z}}
\newcommand{\Prob}{\mathbb{P}}
\def\Var{{\rm Var}}
\def\phi{\varphi}
\def\bee{\begin{eqnarray*}}
\def\ene{\end{eqnarray*}}
\title{A note on the Littlewood-Offord problem for discrete log-concave distributions}
\author{Arnaud Marsiglietti\footnote{Department of Mathematics, University of Florida, Gainesville, FL 32611, USA, E-mail: a.marsiglietti@ufl.edu} \, and James Melbourne\footnote{Probabilidad y Estad\'isticas, Centro de Investigaci\'ones en Matem\'aticas, Guanajuato, GTO 36023, MX, E-mail: james.melbourne@cimat.mx}}
\date{}
\begin{document}

\maketitle

\begin{abstract}

We present an extension of the famous Littlewood-Offord problem when Bernoulli distributions are replaced with discrete log-concave distributions. A variant of the Littlewood-Offord problem for arithmetic progressions, as well as an entropic version, is also discussed. Along the way, we recover and extend a result of Madiman and Woo (2015) on the entropy power inequality for discrete uniform distributions.

\end{abstract}

\vskip5mm
{\bf Keywords:} Littlewood-Offord problem, log-concave, arithmetic progression, R\'enyi entropy, majorization

%\vskip1cm
\section{Introduction}

Given $a = (a_1, \dots, a_n) \in (\R \setminus \{0\})^n$ and independent random variables $X_1, \dots, X_n$, $n \geq 1$, with a Rademacher distribution, that is, for all $1 \leq k \leq n$, $\Prob(X_k= \pm 1) = 1/2$,
the question of estimating the quantity
$$ \sup_{x \in \R} \, \Prob(a_1 X_1 + \cdots + a_n X_n = x) $$
is often referred to as the Littlewood-Offord problem. A classical result of Littlewood and Offord \cite{LO} and Erd\"os \cite{E} states that
\begin{eqnarray}\label{erdos}
\sup_{a \in (\R \setminus \{0\})^n} \sup_{x \in \R} \, \Prob(a_1 X_1 + \cdots + a_n X_n = x) \leq \frac{1}{2^n}\binom{n}{\lfloor \frac{n}{2} \rfloor} = O \bigg( \frac{1}{\sqrt{n}} \bigg).
\end{eqnarray}

Kleitman extended this result when $a_1, \dots, a_n$ are vectors in a Hilbert space \cite{K1}, \cite{K2}. Many variants of the Erd\"os-Littlewood-Offord problem have been established, such as improved bounds under certain constraints on the $a_i$'s \cite{H}, an inverse Littlewood-Offord theorem \cite{TV}, \cite{NV}, and a resilience version \cite{BFK}, but most are mainly dealing with Bernoulli distribution with parameter $1/2$. Recently, Fox, Kwan and Sauermann \cite[Question 6.2]{FKS} asked whether the Littlewood-Offord problem can be solved for Bernoulli distribution of arbitrary parameter $p \in (0,1)$. This question has been investigated by Singhal \cite{S}, who gave a qualitative solution to the problem, showing that a maximizer is obtained for $a_i \in \{-1, 1\}$ and gave Fourier theoretic bounds. Sharp quantitative bounds were found by Madiman, Melbourne, and Roberto in \cite{MMR} where an entropic generalization of the problem was considered. The case of general random variables was considered recently by Juškevičius and Kurauskas \cite{JK}.

Recall that an integer-valued random variable $X$ is said to be discrete log-concave if its probability mass function $p$ satisfies
$$ p(j)^{2} \geq p(j-1)p(j+1) $$
for all $j \in \mathbb{Z}$ and the support of $X$ is an integer interval. Discrete log-concave distributions form an important class. Examples include discrete uniform, Bernoulli, binomial
and convolutions of Bernoulli distributions with arbitrary parameters, Poisson, geometric, negative binomial, etc. (cf. \cite{JG} and references therein). We refer to \cite{Sta}, \cite{Bre}, \cite{SW}, \cite{Bra}  for further background on log-concavity.

The goal of this article is to extend the solution of the Littlewood-Offord problem to the whole class of discrete log-concave distributions. Our bounds are quantitative and non-asymptotic. In particular, we prove the following.

\begin{thm}\label{main1}

Let $X_1, \dots, X_n$, $n \geq 1$, be independent discrete log-concave random variables finitely supported. Then,
\begin{equation}\label{1}
\sup_{a \in (\R \setminus \{0\})^n} \sup_{x \in \R} \, \Prob(a_1 X_1 + \cdots + a_n X_n = x) \leq \frac{1}{\sqrt{1 + c \sum_{k=1}^n \Var(X_k)}},
\end{equation}
with $c=1$. Moreover, one may take $c = 2$ when the random variables are, in addition, symmetric about a point.

\end{thm}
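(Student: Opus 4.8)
The plan is to reduce the general log-concave case to a statement about a single, well-understood quantity: the maximum mass (or concentration function) of a sum of independent discrete log-concave variables. The key structural fact I would invoke is that the concentration function $Q(S) = \sup_x \Prob(S = x)$ of a sum of independent integer-valued random variables is controlled by variance. Concretely, I expect the heart of the argument to be an inequality of the form $Q(X_1 + \cdots + X_n)^{-2} \geq 1 + \sum_k \Var(X_k)$ (or with the factor $2$ in the symmetric case), valid for independent discrete log-concave $X_k$. Once such an inequality is in hand for sums with all coefficients equal to $1$, the passage to arbitrary nonzero real coefficients $a_k$ should follow by a standard grouping/rationality argument: clearing denominators and scaling, one may assume the $a_k$ are integers; then $a_k X_k$ is itself (up to the lattice $a_k\mathbb{Z}$) a log-concave variable, and after reindexing onto a common sublattice one reduces to the unit-coefficient case on $\mathbb{Z}$. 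One must check that replacing $X_k$ by $a_k X_k$ does not decrease $\Var$, which is immediate since $\Var(a_k X_k) = a_k^2 \Var(X_k) \geq \Var(X_k)$ as $a_k$ is a nonzero integer.

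\medskip

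\textbf{Proving the core variance inequality.} For a single discrete log-concave variable $X$ with pmf $p$, I would first establish the sharp one-variable bound $Q(X)^{-2} \geq 1 + c\,\Var(X)$ with $c = 1$ in general and $c = 2$ in the symmetric case. A natural route is via \emph{majorization}: among all log-concave pmfs with a given maximum value $M = \max_j p(j)$, the ones maximizing variance are geometric-type (two-sided geometric in the symmetric case), and for those the relation between $M$ and $\Var$ can be computed explicitly, yielding exactly $1 + \Var$ (resp.\ $1 + 2\Var$). Alternatively, one can argue that the extremal cases are discrete uniform distributions on intervals — this is where the promised connection to Madiman--Woo's entropy power inequality for discrete uniforms enters — and verify the identity $Q^{-2} = 1 + 2\Var$ directly for the uniform on $\{1, \dots, m\}$ (where $Q = 1/m$ and $\Var = (m^2 - 1)/12$, so $1 + 2\Var = (m^2+5)/6$... one must be careful here, so likely the clean extremal family is geometric rather than uniform, and uniform appears as a limiting/boundary case).

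\medskip

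\textbf{From one variable to $n$ variables.} To go from the single-variable bound to the sum, I would not try to track the full pmf of the sum but instead find a quantity that is both \emph{superadditive} in the right sense and comparable to $Q^{-2}$. The candidate is $F(X) := Q(X)^{-2} - 1$, for which I would want $F(X + Y) \geq F(X) + F(Y)$ for independent log-concave $X, Y$ (this is exactly an entropy-power-inequality-type statement for the $\infty$-Rényi entropy, since $Q(X) = 2^{-h_\infty(X)}$). Combined with the base case $F(X_k) \geq c\,\Var(X_k)$ and additivity of variance, this gives $Q(S)^{-2} - 1 \geq c\sum_k \Var(X_k)$, which rearranges to \eqref{1}. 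The superadditivity of $F$ for log-concave summands is the step I expect to be the \textbf{main obstacle}: it is a sharp "$\infty$-Rényi entropy power inequality" in the discrete log-concave setting, and proving it likely requires either a clever monotone coupling, a rearrangement argument reducing to the case where both summands are uniform (or geometric) on intervals, or an appeal to a known result such as the one of Madiman--Woo that the excerpt advertises recovering. If the clean superadditivity fails with the exact constant, a fallback is to prove it in the two extremal families and then use a majorization/localization principle to transfer it, at the cost of a more delicate argument.
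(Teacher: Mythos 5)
Your overall skeleton (reduce to unit coefficients, then use a variance bound on the concentration function) is the right shape, but two of your key steps do not survive scrutiny. First, the coefficient reduction is not a ``standard grouping/rationality argument.'' After clearing denominators you face $\sum_k b_k X_k$ with $b_k \in \mathbb{Z}\setminus\{0\}$, and the summands $b_k X_k$ live on different lattices $b_k\mathbb{Z}$; there is no common-sublattice reindexing that makes them simultaneously log-concave on $\mathbb{Z}$, and the sum itself is typically not log-concave (e.g.\ $X_1 + N X_2$ with $X_i$ Bernoulli$(1/2)$ has $M = 1/4$ while $1/\sqrt{1+\Var} \approx 2/N$ is tiny, so no single-variable bound in terms of $\Var(b_kX_k)$ can apply -- your remark that $\Var(b_kX_k)\ge\Var(X_k)$ points in an unusable direction). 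The step that actually does the work in the paper is a genuine majorization theorem: by Madiman--Wang--Woo, $X_1+\cdots+X_n \prec X_1^{\#}+\cdots+X_n^{\#}$ for the squeezed rearrangements, and since $(b_kX_k)^{\#} = \mathrm{sign}(b_k)X_k$, one gets $a\cdot X \prec v\cdot X$ for signs $v_k=\pm1$ (the passage from real $a_k$ to integer $b_k=T(a_k)$ uses a $\mathbb{Q}$-linear map $T$ and the elementary fact $Y \prec f(Y)$), and then Schur convexity of $M$ (equivalently Schur concavity of R\'enyi entropy) gives $M(a\cdot X)\le M(v\cdot X)$. This is the missing idea in your sketch.

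Second, the lemma you identify as the main obstacle -- superadditivity of $F(X)=M(X)^{-2}-1$ over independent log-concave summands -- is false: for $X,Y$ i.i.d.\ uniform on $\{1,\dots,l\}$ one has $M(X+Y)=1/l$, so $F(X+Y)=l^2-1$ while $F(X)+F(Y)=2(l^2-1)$. Fortunately it is also unnecessary. Discrete log-concavity is preserved under convolution, so once the coefficients are signs, $v\cdot X$ is a \emph{single} finitely supported log-concave variable; the one-variable bound $M(Z)\le 1/\sqrt{1+\Var(Z)}$ (and $1/\sqrt{1+2\Var(Z)}$ for $Z$ symmetric about a point), which is the Aravinda / Bobkov--Marsiglietti--Melbourne result the paper cites rather than reproves, applied to $Z=v\cdot X$ together with additivity of variance gives $M(v\cdot X)\le 1/\sqrt{1+c\sum_k\Var(X_k)}$, completing the proof. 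So your ``base case plus superadditivity'' scheme should be replaced by ``closure under convolution plus the single-variable bound applied to the whole sum,'' and your variational derivation of the one-variable bound (geometric vs.\ uniform extremizers) is left as a sketch of a result that is already nontrivial in the literature.
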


We note that this recovers the $O(1/\sqrt{n})$ bound for independent Bernoulli distribution with parameter $1/2$.

It has been shown in \cite{MMR} that the bound \eqref{1} holds with $c=2$ when the $X_k$'s have a Bernoulli distribution with arbitrary parameters.

Theorem \ref{main1} is sharp up to an absolute constant, as the left-hand side of inequality \eqref{1} can be lower bounded by
$$ \frac{1}{\sqrt{1 + 12\sum_{k=1}^n \Var(X_k)}}, $$
see Remark \ref{sharp}.

The article also presents an entropic version of the Littlewood-Offord problem. See Section~\ref{2} for a precise definition of the R\'enyi entropy power $N_{\alpha}$.

\begin{thm}\label{main2}
Let $\alpha \in [0, +\infty]$ and $n \geq 1$. Let $X_1, \dots, X_n$ be independent discrete log-concave random variables finitely supported. Then,
\begin{equation}\label{entrop}
\inf_{a \in (\R \setminus \{0\})^n} N_\alpha(a_1 X_1 + \cdots + a_n X_n) \geq 1 + c \sum_{k=1}^n \Var(X_k),
\end{equation}
with $c=1$. Moreover, one may take $c = 4$ when $1 < \alpha \leq 2$, and for other values of $\alpha$ one may take $c=2$ when the random variables are, in addition, symmetric about a point.
\end{thm}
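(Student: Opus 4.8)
The plan is to reduce the R\'enyi entropy power bound \eqref{entrop} to the point-mass bound \eqref{1} of Theorem \ref{main1}, exploiting two standard features of R\'enyi entropy: monotonicity in $\alpha$ and the explicit value at $\alpha = \infty$. First recall that $N_\infty(Y) = 1/\sup_x \Prob(Y=x)$ for an integer-valued $Y$ (up to the normalization conventions fixed in Section \ref{2}), and that $\alpha \mapsto N_\alpha(Y)$ is nonincreasing. Since a linear image $a_1 X_1 + \cdots + a_n X_n$ of independent discrete log-concave variables is supported on a coset of a discrete subgroup of $\R$ (or, after clearing denominators, on a scaled copy of $\mZ$), Theorem \ref{main1} gives $N_\infty(a_1 X_1 + \cdots + a_n X_n) \geq 1 + \sum_k \Var(X_k)$ in general, and $\geq 1 + 2\sum_k \Var(X_k)$ in the symmetric case. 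By monotonicity, the same lower bound holds for every $N_\alpha$ with $\alpha \geq 0$; this already yields the claim with $c = 1$ in general and $c = 2$ in the symmetric case, covering all the assertions except the improvement to $c = 4$ on the range $1 < \alpha \leq 2$.

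For the improved constant $c = 4$ when $1 < \alpha \leq 2$, the idea is that in this regime one does not pass to $\alpha = \infty$ but instead works directly with the $\alpha$-R\'enyi entropy and uses the log-concavity of the summands more efficiently. Concretely, I would first establish the bound for $\alpha = 2$ --- i.e., a bound on the collision probability $\sum_x \Prob(Y = x)^2$ where $Y = \sum_k a_k X_k$ --- via a second-moment / Fourier (characteristic function) computation: $\sum_x \Prob(Y=x)^2$ is, up to normalization, $\int |\widehat{p_Y}(t)|^2\,dt = \prod_k \int |\widehat{p_{X_k}}(a_k t)|^2$, and one bounds each factor using a sharp quadratic estimate $|\widehat{p_{X_k}}(s)|^2 \le \exp(-c\Var(X_k) s^2)$-type inequality valid for log-concave mass functions (this is where the factor $4$ instead of $2$ should emerge, since collision probability is more sensitive to the variance than the $L^\infty$ norm). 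Then extend from $\alpha = 2$ down to $1 < \alpha < 2$ again by monotonicity of $N_\alpha$ in $\alpha$ (so $N_\alpha \ge N_2$ for $\alpha \le 2$).

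The main obstacle I anticipate is the sharp per-coordinate estimate needed in the $\alpha = 2$ step: one must show that for a discrete log-concave $X$ the factor $\int_{-\pi}^{\pi}|\widehat{p_X}(t)|^2\,dt$ (equivalently, the collision probability of $X$) is controlled by $1/(1 + 4\Var(X))$, or prove the corresponding statement for the product directly. Log-concavity should enter through a comparison/majorization argument --- among integer log-concave laws with a given variance, the collision probability is extremized by a discrete uniform (or a geometric) law, and for those one can compute explicitly --- mirroring the role that Theorem \ref{main1} and its proof (via majorization, as the keywords suggest) play for the $L^\infty$ bound. A secondary technical point is handling the passage from real coefficients $a_k$ to a common lattice and making sure the normalization of $N_\alpha$ is scale-covariant so that the $\Var(X_k)$ (not $a_k^2\Var(X_k)$) appear on the right-hand side; this is routine but must be stated carefully to match the normalization fixed in Section \ref{2}.
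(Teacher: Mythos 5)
Your argument for $c=1$ and $c=2$ rests on invoking Theorem \ref{main1} and then using $N_\alpha \geq N_\infty$; but in this paper Theorem \ref{main1} has no independent proof --- it is obtained as the case $\alpha=\infty$ of Theorem \ref{main2} --- so as written this part is circular. The substantive content you would need to supply is exactly what the paper's proof consists of: a reduction to $\pm 1$ coefficients via majorization (Theorem \ref{thm: majorization by signs}, built on Lemma \ref{lem: majorization lemma} and Theorem \ref{rearrance-2}), Schur concavity of R\'enyi entropy to get $N_\alpha(a\cdot X)\geq N_\alpha(v\cdot X)$, and then the one-dimensional entropy--variance bound \eqref{entropy} (from Theorem \ref{LC}) applied to $v\cdot X$, which is itself log-concave with variance $\sum_k \Var(X_k)$. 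You never produce this reduction, and without it (or some other genuine proof of the $\alpha=\infty$ case for arbitrary real coefficients) the $c=1,2$ claims are not established.

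The $c=4$ part has a concrete error and a large missing piece. The identity you write, $\int |\widehat{p_Y}(t)|^2\,dt = \prod_k \int |\widehat{p_{X_k}}(a_k t)|^2\,dt$, is false: only the integrand factors, $\widehat{p_Y}(t)=\prod_k \widehat{p_{X_k}}(a_k t)$, and to split the integral one must use H\"older with exponents $q_k$, $\sum_k 1/q_k = 1$, which then requires sharp per-coordinate $L^{q_k}$ bounds on characteristic functions. Such bounds are available for Bernoulli variables (this is precisely the refined argument of \cite{MMR}, reproduced in Section 5, inequality \eqref{A-2}), but not for general finitely supported log-concave laws, and your proposed pointwise subgaussian bound $|\widehat{p_X}(t)|^2\le e^{-c\Var(X)t^2}$ on $[-\pi,\pi]$ is neither established nor obviously true with a usable constant; moreover, with incommensurable real $a_k$ the collision probability of $Y$ is not given by an integral over a single period, which is exactly the difficulty the sign-reduction is designed to eliminate. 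The paper's actual route for $c=4$ is much shorter: after reducing to $v\cdot X$ by majorization and Schur concavity, it applies the known single-variable inequality from \cite{BMM} that $N_\alpha(Z)\geq 1+4\Var(Z)$ for log-concave $Z$ when $1<\alpha\leq 2$ (and $N_\alpha(Z)\geq 1+2\Var(Z)$ in the symmetric case), using that $v\cdot X$ is log-concave and $\Var(v\cdot X)=\sum_k\Var(X_k)$. So the missing idea in your proposal is the majorization-to-signs step; once you have it, the Fourier machinery you sketch is unnecessary, and without it your Fourier sketch does not go through as stated.
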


It has been shown in \cite{MMR} that the bound \eqref{entrop} holds with $c = \frac{2 \alpha}{\alpha - 1}$, $\alpha \geq 2$, when the $X_k$'s have a Bernoulli distribution. It turns out that Theorem \ref{main1} is a particular case of Theorem \ref{main2}.

We also present a version of the Littlewood-Offord problem for arithmetic progressions.

\begin{thm}\label{main3}
Let $X_1, \dots, X_n$, $n \geq 1$, be independent discrete log-concave random variables finitely supported. Then,
\begin{equation*}
\sup_{a \in (\R \setminus \{0\})^n} \sup_{x \in \R} \, \Prob(a_1 X_1 + \cdots + a_n X_n \in A_{l,m}(x)) \leq \frac{l}{\sqrt{1 + c \sum_{k=1}^n \Var(X_k) + c \, \frac{l^2-1}{12}}},
\end{equation*}
with $c=1$. Moreover, one may take $c = 2$ when the random variables are, in addition, symmetric about a point.
\end{thm}
Here, $A_{l,m}(x)$ is an arithmetic progression of length $l \geq 1$, that is $A_{l,m}(x) = \{x + mj \}_{j=1}^l$ for $m \in \mathbb{Z}$ and $x \in \mathbb{R}$. In fact, $m$ can be taken as a real number (see Section \ref{4}). For example, if the $X_k$'s are i.i.d. Bernoulli with parameter $1/2$, then we deduce
\[
    \sup_{a \in (\R \setminus \{0\})^n} \sup_{x \in \R} \, \Prob(a_1 X_1 + \cdots + a_n X_n \in A_{l,m}(x)) \leq \frac l {\sqrt{ 1 + \frac n 2 + \frac{l^2-1}{6}}}.
\]
We refer to Section \ref{4} for an estimate that improves Theorem \ref{main3} when the $X_k$'s have a Bernoulli distribution with arbitrary parameter $p \in (0,1)$, $p \neq \frac{1}{2}$.  

Let us note that the case $l = 1$ corresponds to the classical Littlewood-Offord problem, hence Theorem \ref{main1} is a particular case of Theorem \ref{main3}.

Finally, our method allows us to establish the following entropy power inequality for discrete uniform distributions.

\begin{thm}\label{EPI-unif}
    Let $\alpha \in [0, 2]$ and $n \geq 1$. Let $U_1, \dots, U_n$ be uniformly distributed independent random variables on any set of integers. Then,
    $$ N_{\alpha} \left( \sum_{k=1}^n U_k \right) \geq \sum_{k=1}^n N_{\alpha}(U_k) - (n-1). $$
\end{thm}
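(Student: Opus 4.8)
The plan is to prove the statement by induction on $n$, with the main work done in the base case $n=2$. For the inductive step, assuming the result for $n-1$ variables, it suffices to establish the two-variable inequality $N_\alpha(V + U_n) \geq N_\alpha(V) + N_\alpha(U_n) - 1$ for $V = U_1 + \cdots + U_{n-1}$; but here one must be careful, since $V$ is a sum of uniforms and need not itself be uniform. So a cleaner route is to reduce everything to a single inequality about sums of two uniforms and then iterate \emph{at the level of the bound}: since $N_\alpha(U_k) = |S_k|$ when $U_k$ is uniform on a set $S_k$ of $|S_k|$ integers (for $\alpha \in [0,2]$ the R\'enyi entropy of the uniform is $\log|S_k|$, so $N_\alpha(U_k) = |S_k|$), the claim becomes
\[
N_\alpha\Big(\sum_{k=1}^n U_k\Big) \geq \sum_{k=1}^n |S_k| - (n-1) = 1 + \sum_{k=1}^n (|S_k| - 1).
\]
Now observe that $|S_k| - 1 \geq 12 \Var(U_k)$ is false in general, but $\Var(U_k) = (|S_k|^2-1)/12$ when $S_k$ is an arithmetic progression — and crucially, scaling $U_k$ by $a_k$ does not change $N_\alpha$ while it scales the variance. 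The intended mechanism is therefore to invoke the machinery behind Theorem \ref{main2}: the proof of \eqref{entrop} presumably produces, for $\alpha \in [0,2]$ and log-concave summands, a bound of the shape $N_\alpha(\sum a_k X_k) \geq 1 + \sum_k c_k$ where $c_k$ depends only on the one-dimensional law of $X_k$; specializing $X_k = U_k$ uniform on a block of $|S_k|$ consecutive integers (the extremal/reduced case, reachable by the majorization reduction used for log-concave laws) should give exactly $c_k = |S_k| - 1 = N_\alpha(U_k) - 1$.

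Concretely, the key steps I would carry out are: (i) record that for $\alpha \in [0,2]$ and $U$ uniform on an $N$-point integer set, $N_\alpha(U) = N$, and that $N_\alpha$ is invariant under $U \mapsto aU$ for $a \neq 0$ and under translation; (ii) reduce a general integer set to an arithmetic progression of the same cardinality — since R\'enyi entropy of a uniform depends only on the cardinality of the support, we may assume $U_k$ is uniform on $\{1, \dots, |S_k|\}$ without loss of generality, which is genuinely log-concave; (iii) apply the core estimate underlying Theorem \ref{main2} with $c = 1$ to the (now log-concave) variables $U_k$ and coefficients $a_k = 1$, obtaining $N_\alpha(\sum U_k) \geq 1 + \sum_k \Var(U_k) = 1 + \sum_k \frac{|S_k|^2 - 1}{12}$; (iv) finally compare $1 + \sum_k \frac{|S_k|^2-1}{12}$ with the target $1 + \sum_k (|S_k| - 1)$ — but note $\frac{N^2-1}{12} = \frac{(N-1)(N+1)}{12} \geq N - 1$ only when $N \geq 11$, so the variance bound alone is \emph{insufficient} for small supports, and one needs the sharper structure of the argument rather than the variance-only conclusion.

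This last point is where I expect the real obstacle to lie: the variance-based Theorem \ref{main2} loses too much for uniforms on short intervals, so the proof must instead track the actual R\'enyi-entropy contribution of each uniform summand through whatever convolution/majorization inequality drives Theorem \ref{main2}, rather than passing through $\Var$. I would therefore revisit the proof of Theorem \ref{main2} and extract the intermediate inequality in its sharp, entropy-additive form $N_\alpha(U + W) \geq N_\alpha(U) + N_\alpha(W) - 1$ directly for pairs of uniforms (this is essentially a one-dimensional statement: the convolution of two uniform densities on intervals of lengths $p$ and $q$ is a trapezoid, whose $\alpha$-R\'enyi entropy power one can compute or bound below by $p + q - 1$ for $\alpha \leq 2$), and then iterate that pairwise bound — using at each stage that a trapezoidal (more generally, log-concave) density only improves under further convolution with a uniform — to telescope to $\sum_k N_\alpha(U_k) - (n-1)$. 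The crux is thus the two-uniform trapezoid computation together with the monotonicity needed to keep the induction going past the first step; the restriction $\alpha \le 2$ enters precisely because beyond it the R\'enyi entropy of a log-concave density is no longer controlled the same way and the trapezoid estimate can fail.
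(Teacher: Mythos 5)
Your argument is built on a normalization slip that derails the whole plan: with the paper's definition $N_\alpha(X)=e^{2H_\alpha(X)}$, a uniform $U$ on an $N$-point set has $N_\alpha(U)=N^2$, not $N$. Consequently the target inequality reads $N_\alpha(\sum_k U_k)\geq 1+\sum_k(|S_k|^2-1)$, and since $\Var(U_k)=(|S_k|^2-1)/12$ when $S_k$ is an integer interval, one has the exact identity $N_\alpha(U_k)-1=12\,\Var(U_k)$. So, contrary to your step (iv), a variance bound does \emph{not} lose anything for short supports --- it is precisely sufficient, provided one has the constant $c=12$ rather than the $c=1$ (or $c=4$) of Theorem \ref{main2}. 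That sharper constant is the ingredient your proposal never produces: the paper gets it from Parseval, $\|\widehat{f_{U}}\|_2^2=1/l=(1+12\Var(U))^{-1/2}$, combined with a Fourier lemma from \cite{MMR} giving $\|\widehat{f}_{\sum_k U_k}\|_2^2\leq(1+12\sum_k\Var(U_k))^{-1/2}$, hence $H_2(\sum_k U_k)\geq\frac12\log(1+12\sum_k\Var(U_k))$, and then monotonicity of $H_\alpha$ for $\alpha\leq 2$ (this is also exactly where the restriction $\alpha\leq2$ enters, not through any trapezoid estimate). For general integer sets the reduction to intervals is not ``WLOG because entropy depends only on cardinality'' --- replacing the sets changes the law of the sum --- but follows from Theorem \ref{rearrance-2} plus Schur concavity, since each $U_k^{\#}$ is uniform on an interval; you gesture at this but step (ii) as written is unjustified.

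Your fallback route (compute the two-uniform trapezoid to get $N_\alpha(U+W)\geq N_\alpha(U)+N_\alpha(W)-1$, then induct) is also not carried out, and it faces the obstruction you yourself flag: after one convolution the partial sum is no longer uniform, so the pairwise inequality you would need is a genuine discrete EPI for log-concave (trapezoidal) laws, which is substantially harder than a two-uniform computation and is not proved here; the appeal to ``monotonicity under further convolution with a uniform'' is unsubstantiated. The paper sidesteps induction entirely by running the $L_2$ argument simultaneously on all $n$ summands. In short: the key missing ideas are (a) the correct value $N_\alpha(U)=|S|^2$, which converts the theorem into an exact $c=12$ variance bound, and (b) the Parseval/$L_2$ mechanism (plus monotonicity in $\alpha$) that delivers that constant for sums of uniforms; without them the proposal does not close.
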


The particular case $\alpha = 1$ and $n=2$ of Theorem \ref{EPI-unif} has been established by Madiman and Woo in \cite{MW}.

The article is organized as follows. Section \ref{prel} provides the necessary background on the notion of majorization and rearrangement inequalities, which are the main ingredients in the proofs of Theorems \ref{main1} and \ref{main2}. The proofs of Theorems \ref{main1} and \ref{main2} are postponed to Section~\ref{2}. Section \ref{sec-4} demonstrates the advantage of our results compared with general bounds on the concentration function existing in the literature. Section \ref{4} presents a variant of the Littlewood-Offord problem for arithmetic progressions, in particular, Theorem \ref{main3} is proved. The last section focuses on the Bernoulli and uniform distributions, where Theorem \ref{EPI-unif} is proved.

\section{Preliminaries}\label{prel}

Throughout the article, we use the notation $x \cdot y = \sum_{i=1}^n x_i y_i$, for the dot product of $x=(x_1, \dots, x_n)$ and $y=(y_1, \dots, y_n)$. We also denote
\begin{equation}\label{M}
M(X) = \sup_{x \in \R} \, \Prob(X = x).
\end{equation}

We will need the following result.

\begin{thm}[\cite{A}, \cite{BMM}]\label{LC}

If the random variable $X$ has a discrete log-concave distribution, then
\begin{equation}\label{M-bound}
\frac{1}{\sqrt{1 + 12\Var(X)}} \leq M(X) \leq \frac{1}{\sqrt{1 + \Var(X)}}.
\end{equation}
Moreover, if the distribution of $X$ is symmetric about a point, then the upper bound may be sharpened to
$$ M(X) \leq \frac{1}{\sqrt{1 + 2\Var(X)}}. $$

\end{thm}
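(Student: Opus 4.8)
\emph{Plan.} Write $N_\alpha=e^{2h_\alpha}$ for the R\'enyi entropy power of order $\alpha$ (as in Section~\ref{2}), and for a finitely supported uniform $U_k$ put $m_k=|\supp(U_k)|$, so that $h_\alpha(U_k)=\log m_k$ and hence $N_\alpha(U_k)=m_k^2$ for \emph{every} $\alpha$. Thus the right-hand side of the asserted inequality, $\sum_k N_\alpha(U_k)-(n-1)=\sum_k m_k^2-(n-1)$, does not depend on $\alpha$, while $\alpha\mapsto N_\alpha(\sum_k U_k)$ is non-increasing. It therefore suffices to treat $\alpha=2$, i.e.\ to prove
\[
\sum_{i}\Prob\big(\textstyle\sum_{k=1}^{n}U_k=i\big)^2\;\le\;\big(\textstyle\sum_{k=1}^{n}m_k^2-(n-1)\big)^{-1/2}.
\]
(As a check, the endpoint $\alpha=0$ also follows on its own from the trivial integer sumset bound $|\supp(\sum_k U_k)|\ge\sum_k m_k-(n-1)$ together with the elementary inequality $(\sum_k m_k-(n-1))^2\ge\sum_k m_k^2-(n-1)$, i.e.\ $\sum_{j<k}(m_j-1)(m_k-1)\ge0$.)

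Next I would symmetrize. Let $U_k'$ be an independent copy of $U_k$ and put $\tilde U_k=U_k-U_k'$; each $\tilde U_k$ is symmetric and discrete log-concave, hence so is $\sum_k\tilde U_k$, and a symmetric discrete log-concave law attains its maximum at its symmetry point $0$. Therefore $\sum_i\Prob(\sum_k U_k=i)^2=\Prob(\sum_k\tilde U_k=0)=M(\sum_k\tilde U_k)$, while $M(\tilde U_k)=\Prob(U_k=U_k')=1/m_k$, so the case $\alpha=2$ is equivalent to $M(\sum_k\tilde U_k)^{-2}\ge\sum_k M(\tilde U_k)^{-2}-(n-1)$. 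By a rearrangement inequality (cf.\ Section~\ref{prel}) one may now assume each $U_k$ is uniform on the interval $\{0,\dots,m_k-1\}$: this can only increase $\|p_{U_1}\ast\cdots\ast p_{U_n}\|_2$, hence only decrease $N_2(\sum_k U_k)$, and it leaves $\sum_k m_k^2-(n-1)$ unchanged. With this normalization $\Var(\tilde U_k)=2\Var(U_k)=(m_k^2-1)/6$, each $\tilde U_k$ is the symmetric ``triangular'' law with characteristic function the normalized Fej\'er kernel $F_{m_k}(\theta)=\sin^2(m_k\theta/2)/(m_k^2\sin^2(\theta/2))$, and the goal becomes the concentration bound
\[
M\big(\textstyle\sum_{k=1}^{n}\tilde U_k\big)\;\le\;\big(1+6\,\Var\big(\textstyle\sum_{k=1}^{n}\tilde U_k\big)\big)^{-1/2}
\]
for sums of independent uniform differences.

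Proving this concentration bound is, I expect, the main obstacle, precisely because the constant is \emph{attained with equality at $n=1$} (and is asymptotically near-optimal as $n\to\infty$, by the local central limit theorem), so no lossy step is affordable. I see two natural routes. \textbf{(i) Induction on $n$.} The base case $n=1$ is the equality $M(\tilde U_1)=1/m_1=(1+6\Var(\tilde U_1))^{-1/2}$; for the step, with $V=\sum_{j<n}\tilde U_j$ one has the identity $M(V+\tilde U_n)=\tfrac1{m_n^2}\sum_{k=0}^{m_n-1}\Prob(|V|\le k)$, and the task is to bound its right-hand side. Here the log-concavity of $V$ alone is \emph{not} enough: the one-summand inequality $M(V+\tilde U_n)^{-2}\ge M(V)^{-2}+(m_n^2-1)$ already fails for the symmetric log-concave law $V=\mathrm{Unif}\{-1,0,1\}$, so one must carry along and exploit the finer structure of $V$ as a sum of uniform differences (equivalently, the Fourier decay of its characteristic function). \textbf{(ii) Direct Fourier estimate.} By Parseval, $M(\sum_k\tilde U_k)=\tfrac1{2\pi}\int_{-\pi}^{\pi}\prod_{k=1}^{n}F_{m_k}(\theta)\,d\theta$; one bounds this by combining the pointwise estimates $0\le F_m(\theta)\le1$ and $F_m(\theta)\le\min\{1,\pi^2/(m\theta)^2\}$ with a quadratic, Gaussian-type majorant for $\prod_k F_{m_k}$ near $\theta=0$, calibrated to reproduce the sharp value at $n=1$; the delicate numerics live here.

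Granting the concentration bound, the case $\alpha=2$ — and hence, by the monotonicity noted in the first paragraph, Theorem~\ref{EPI-unif} for all $\alpha\in[0,2]$ — follows at once. The case $\alpha=1$, $n=2$ recovers the result of Madiman and Woo \cite{MW}.
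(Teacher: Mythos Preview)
Your proposal does not address the stated theorem. Theorem~\ref{LC} is the two-sided bound $(1+12\Var(X))^{-1/2}\le M(X)\le(1+\Var(X))^{-1/2}$ for a \emph{single} discrete log-concave random variable $X$, with the sharper constant $2$ replacing $1$ in the symmetric case. This result is not proved in the present paper at all; it is quoted from \cite{A} and \cite{BMM} as a known ingredient. What you have written is instead a proof sketch for Theorem~\ref{EPI-unif}, the entropy power inequality $N_\alpha\big(\sum_k U_k\big)\ge\sum_k N_\alpha(U_k)-(n-1)$ for independent discrete uniforms. The two statements are unrelated, so as a proof of Theorem~\ref{LC} the proposal is simply off target.

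If your intended target was Theorem~\ref{EPI-unif}, then your reduction to $\alpha=2$ via monotonicity and the identity $N_\alpha(U_k)=m_k^2$ matches the paper, as does the rearrangement step reducing to uniforms on integer intervals. From there, however, you take a detour: you symmetrize to $\tilde U_k=U_k-U_k'$ and recast the $\alpha=2$ case as the concentration bound $M\big(\sum_k\tilde U_k\big)\le\big(1+6\Var(\sum_k\tilde U_k)\big)^{-1/2}$, which you then leave as ``the main obstacle,'' with neither route~(i) nor route~(ii) completed. The paper bypasses this entirely: it stays on the Fourier side, uses Parseval to write $\|\widehat{f}_{U_k}\|_2^2=1/m_k=(1+12\Var(U_k))^{-1/2}$ for each summand, and then invokes \cite[Lemma~2.7]{MMR} to pass directly to $\|\widehat{f}_{\sum_k U_k}\|_2^2\le(1+12\sum_k\Var(U_k))^{-1/2}$. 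That lemma is exactly the sharp Fourier estimate you flag as delicate in route~(ii), so the paper's argument terminates one line after the point where yours stalls.
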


Let us note that the lower bound in \eqref{M-bound} holds for arbitrary random variables. The upper bound in \eqref{M-bound} was proven by Aravinda \cite{A}, who refined the bound
$$ M(X) \leq \frac{2}{\sqrt{1 + 4\Var(X)}} $$
obtained by Bobkov and the authors in \cite{BMM} (see also \cite{GMPP}, \cite{JMNS}).

Recall that a probability distribution $p = (p_1, \dots, p_n)$ written in descending order with positive entries, $p_i \geq p_{i+1}$, is majorized by another $q = (q_1, \dots, q_m)$ also written in descending order with positive entries when 
\begin{align}
    \sum_{i=1}^k q_i \geq \sum_{i=1}^k p_i
\end{align}
holds for all $k$. We write $p \prec q$ when $p$ is majorized by $q$. We also write $X \prec Y$ when the probability mass function of $X$ is majorized by the probability mass function of $Y$. After extending $q$ to $\{1, \dots, n\}$ by setting $q_j = 0$ for $m+1 \leq j \leq n$, this is equivalent to the existence of a doubly stochastic matrix $A$ such that $Aq = p$ (see \cite{MOA}). 

For example, if $q$ is a point mass, $(q_1, \dots, q_n) = (1,0,\dots,0)$, then for any other $p = (p_1, \dots, p_n)$ we can write
\begin{align}
    A = \left( \begin{array}{cccc}
    p_1 &p_2 &\dots &p_n \\
    p_2 &p_3 & \dots &p_{1} \\
    \vdots &\vdots & \vdots & \vdots \\
    p_n & p_1 & \dots & p_{n-1}
    \end{array} 
    \right),
\end{align}
so that $Aq = p$. More generally, if $q$ is not a probability sequence and $q = (M,0,\dots, 0)$ while $\sum_i p_i = M$ then setting $\lambda_i = p_i/M$, we can write
\begin{align} \label{eq: double stochastic matrix for point mass}
    A = \left( \begin{array}{cccc}
    \lambda_1 &\lambda_2 &\dots &\lambda_n \\
    \lambda_2 &\lambda_3 & \dots &\lambda_{1} \\
    \vdots &\vdots & \vdots & \vdots \\
    \lambda_n & \lambda_1 & \dots & \lambda_{n-1}
    \end{array} 
    \right),
\end{align}
so that $Aq = p$.

\begin{lem} \label{lem: majorization lemma}
If $Y$ is a random variable taking finitely many values, and $f$ is a deterministic function, then $Y \prec f(Y)$.
\end{lem}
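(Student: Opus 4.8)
The plan is to read off the claim from the doubly stochastic matrix characterization of majorization recalled above, applying the explicit ``point mass'' construction \eqref{eq: double stochastic matrix for point mass} one level set of $f$ at a time. First I would set up notation: let $S = \{y_1, \dots, y_n\}$ be the (finite) support of $Y$, with $p_i = \Prob(Y = y_i) > 0$, let $z_1, \dots, z_m$ be the distinct values taken by $f$ on $S$, and put $B_j = \{ i : f(y_i) = z_j \}$, so that $B_1, \dots, B_m$ partition $\{1, \dots, n\}$ and $\Prob(f(Y) = z_j) = \sum_{i \in B_j} p_i =: q_j$. Thus the probability mass function of $f(Y)$ is obtained from that of $Y$ by summing the entries inside each block $B_j$; equivalently, the mass function of $Y$ is obtained from that of $f(Y)$ by ``splitting'' each mass $q_j$ into the pieces $(p_i)_{i \in B_j}$. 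The whole content of the lemma is that this splitting operation is implemented by a doubly stochastic matrix.

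Next I would construct that matrix blockwise. Since majorization and the doubly stochastic matrix criterion are both insensitive to permutations of coordinates (permutation matrices being doubly stochastic), I may relabel so that each $B_j$ consists of consecutive indices, and I place $q_j$ at the first index of $B_j$ and $0$ at the remaining $|B_j| - 1$ indices; the resulting length-$n$ vector $\tilde q$ is exactly the zero-padded mass function of $f(Y)$ up to reordering. Within block $B_j$ one has $q_j = \sum_{i \in B_j} p_i$ with $q_j > 0$ (since all $p_i > 0$ on the support), so \eqref{eq: double stochastic matrix for point mass} with $M = q_j$ and $\lambda_i = p_i / q_j$ produces a $|B_j| \times |B_j|$ doubly stochastic matrix $A_j$ with $A_j (q_j, 0, \dots, 0) = (p_i)_{i \in B_j}$ (viewing these as column vectors). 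Taking the block-diagonal matrix $A = \diag(A_1, \dots, A_m)$, which is doubly stochastic because each diagonal block is, we get $A \tilde q = p$, and hence the mass function of $Y$ is majorized by that of $f(Y)$, i.e. $Y \prec f(Y)$.

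I do not expect a genuine obstacle here: the only point requiring a little care is reconciling the ``entries in decreasing order, all positive'' convention for $\prec$ with the permutation-insensitive matrix criterion, but this is harmless since permutations are doubly stochastic, and the degenerate cases are trivial (if $f$ is injective on $S$ then $A$ is a permutation and the two mass functions agree up to reordering; if $Y$ is a.s. constant the statement is immediate). If one prefers to avoid the matrix machinery, the same conclusion follows directly from the partial-sum definition: writing $p$ and $q$ in decreasing order with $q$ zero-padded, for each $k$ the $k$ largest $p_i$'s lie in at most $k$ of the blocks $B_j$, so the sum of the $k$ largest $q_j$'s is at least the sum of those block-sums, which is at least $\sum_{i=1}^k p_i$.
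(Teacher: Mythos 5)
Your proposal is correct and follows essentially the same route as the paper's proof: both zero-pad the mass function of $f(Y)$, group the indices so each level set of $f$ is a block of consecutive coordinates, and apply the point-mass construction \eqref{eq: double stochastic matrix for point mass} blockwise to obtain a block-diagonal doubly stochastic matrix sending the padded mass function of $f(Y)$ to that of $Y$. Your closing remark giving a direct partial-sum verification is a nice optional shortcut, but the core argument coincides with the paper's.
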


\begin{proof}
Without loss of generality, we may assume that $Y$ is distributed on $\{1,2,\dots, n\}$, and denote by $\{a_1, \dots, a_m\}$ the support of $f(Y)$. If the distribution of $Y$ is denoted by $p$, note that the distribution of $f(Y)$, written as $q$, will satisfy $q_i = \sum_{j \in f^{-1}(\{a_i\})} p_j$.  Writing $q = (q_1,0, \dots,0,q_2, 0, \dots,0, \dots, q_m, 0,\dots, 0)$ where the number of zeros between $q_i$ and $q_{i+1}$ is determined by the cardinality $n_i$ of $f^{-1}(\{a_i\})$, and writing $p$ in order such that $f^{-1}(\{a_i\}) = \{k_i, k_i+1, \dots, k_i+n_i-1\}$ so that $q_i = p_{k_i} + \cdots + p_{k_i + n_i - 1}$, based on the discussion above we can write a doubly stochastic block matrix,
\begin{align}
    \Lambda = \left( \begin{array}{ccccc}
        (A_1)&(0) &(0) &\dots & (0) \\
        (0) & (A_2) &(0) &\dots & (0) \\
        \vdots & \vdots & \ddots && \vdots \\
        \vdots & \vdots & & \ddots & \vdots \\
        (0) & (0) & \dots & (0) & (A_m)
        \end{array}
        \right)
\end{align}
where each $A_i$ is of the form of $\eqref{eq: double stochastic matrix for point mass}$ for the $p_j$ such that $f(j) = a_i$, so that $\Lambda q  = p$, and the lemma holds.
\end{proof}

If $f \colon \Z \to [0, +\infty)$ is finitely supported, with support $\{x_0, \dots, x_m\}$, denote by $f^\#$ its squeezed rearrangement, that is, $f^\#$ is supported on $\{0, \dots,m\}$ and $f^\#(j) = f(x_j)$, for $j\in\{0, \dots,m\}$. If $X$ is an integer-valued random variable with probability mass function $f$, we denote by $X^\#$ the random variable with probability mass function $f^\#$. The following result was proven in \cite{MWW}.

\begin{thm}[\cite{MWW}]\label{rearrance-2}
If $X_1, \dots, X_n$ are integer-valued independent random variables such that $X_1^\#, \dots, X_n^\#$ are log-concave, then
$$ X_1 + \dots + X_n \prec X_1^\# + \dots + X_n^{\#}. $$    
\end{thm}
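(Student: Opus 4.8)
The plan is to recast majorization through the concentration functionals $M_k(\mu) := \sup\{\mu(A) : |A| \le k\}$, the largest mass a measure puts on $k$ points; for a finitely supported probability measure $M_k(\mu)$ is just the sum of its $k$ largest masses, so $\mu \prec \nu$ is exactly the requirement that $M_k(\mu) \le M_k(\nu)$ for all $k \ge 1$. Hence it suffices to prove $M_k(\mathrm{law}(X_1 + \dots + X_n)) \le M_k(\mathrm{law}(X_1^\# + \dots + X_n^\#))$ for every $k$. I record one fact for later: convolutions of discrete log-concave laws are discrete log-concave, so $X_1^\# + \dots + X_n^\#$ has a unimodal mass function supported on a block of consecutive integers.

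Both sums can be read off a single finite poset. Write $\mathrm{supp}(X_i) = \{s_{i,0} < s_{i,1} < \dots < s_{i,m_i}\}$ and let $g_i$ be the mass function of $X_i^\#$ on $\{0,\dots,m_i\}$, so $\Prob(X_i = s_{i,j}) = g_i(j)$. On the product of chains $\mathcal{P} := \prod_{i=1}^n\{0,\dots,m_i\}$, with the coordinatewise order, rank $r(\vec j) := \sum_i j_i$, and product weight $W(\vec j) := \prod_i g_i(j_i)$ (regarded as a measure on $\mathcal{P}$), we have, with $\psi(\vec j) := \sum_i s_{i,j_i}$,
\[
\Prob\Bigl(\sum_i X_i \in A\Bigr) = W\bigl(\psi^{-1}(A)\bigr), \qquad \Prob\Bigl(\sum_i X_i^\# \in B\Bigr) = W\bigl(r^{-1}(B)\bigr).
\]
Since each map $j \mapsto s_{i,j}$ is strictly increasing, $\psi$ is strictly order-preserving, so each fibre $\psi^{-1}(x)$ is an antichain and $\psi^{-1}(A)$ is a union of $|A|$ antichains. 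Rank-levels are antichains too, and every union of $k$ rank-levels equals $r^{-1}(B)$ for some $|B| \le k$, so (using $W \ge 0$)
\[
M_k\bigl(\mathrm{law}(\textstyle\sum_i X_i)\bigr) \le \sup_{\mathcal{A}} W(\mathcal{A}), \qquad M_k\bigl(\mathrm{law}(\textstyle\sum_i X_i^\#)\bigr) = \sup_{\mathcal{L}} W(\mathcal{L}),
\]
with $\mathcal{A}$ ranging over unions of $k$ antichains of $\mathcal{P}$ and $\mathcal{L}$ over unions of $k$ rank-levels. The whole theorem is now the combinatorial inequality $\sup_{\mathcal{A}} W(\mathcal{A}) \le \sup_{\mathcal{L}} W(\mathcal{L})$.

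This is a \emph{weighted strong Sperner property}: in a product of chains carrying a weight that is a product of discrete log-concave weights, no union of $k$ antichains outweighs the heaviest union of $k$ rank-levels --- and by the unimodality recorded above the heaviest $k$ rank-levels are $k$ consecutive ones. This is the step where log-concavity is truly used, and the one I expect to be the main obstacle. I would prove it by the weighted analogue of the standard route to the strong Sperner property of products of chains: establish a \emph{weighted normalized matching} between consecutive rank-levels --- for levels $N_x$ and $N_{x+1}$ a collection of nonnegative weights on the covering edges realizing the vertex-marginals $W(u)/W(N_x)$ on $N_x$ and $W(v)/W(N_{x+1})$ on $N_{x+1}$ --- and then feed these matchings into a max-flow / linear-programming duality argument that pushes any union of $k$ antichains onto $k$ consecutive rank-levels without decreasing its $W$-weight. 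The existence of the weighted normalized matching is where one uses that each $g_i$ is log-concave: along any covering edge the weight changes by a ratio $g_i(j+1)/g_i(j)$, and log-concavity makes these ratios nonincreasing in $j$, which is precisely the monotonicity needed to verify the Hall-type (fractional) condition for the matching.

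Granting the weighted strong Sperner inequality, the proof closes immediately: $M_k(\mathrm{law}(\sum_i X_i)) \le \sup_{\mathcal{A}} W(\mathcal{A}) \le \sup_{\mathcal{L}} W(\mathcal{L}) = M_k(\mathrm{law}(\sum_i X_i^\#))$ for every $k \ge 1$, which is the claimed majorization; the finite-support hypothesis is exactly what makes the poset $\mathcal{P}$ finite, so no further truncation is needed.
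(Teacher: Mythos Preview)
The paper does not prove this theorem; it is quoted from \cite{MWW} and used as a black box. Your proposal reproduces exactly the strategy of \cite{MWW} (as its title, ``Majorization and R\'enyi Entropy Inequalities via Sperner Theory'', already suggests): recast majorization through the functionals $M_k$, realize both sums on the weighted product-of-chains poset, observe that fibres of the ``original'' sum are antichains while fibres of the ``squeezed'' sum are rank-levels, and reduce everything to the weighted strong Sperner property for products of chains carrying log-concave weights, established there via the weighted normalized-matching route you sketch.
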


Finally, recall that the R\'enyi entropy of order $\alpha \in (0,1) \cup (1, +\infty)$ of a discrete random variable $X$ with values in a countable set $I$ and with probability mass function $p$ is defined as
$$ H_{\alpha}(X) = \frac{1}{1-\alpha} \log \left( \sum_{x \in I} p^{\alpha}(x) \right) = \log(\|p\|_{\alpha}^{\frac{\alpha}{1-\alpha}}). $$
The limit cases are interpreted as
$$ H_0(X) = \log(|\supp(p)|), \quad H_1(X) = -\sum_{x \in I} p(x) \log(p(x)), \quad H_{\infty}(X) = -\log(\sup_{x \in I} p(x)). $$
We note that the $M$-functional defined in \eqref{M} may be viewed as a member of the family of R\'enyi entropies via the formula
\begin{equation}\label{member}
M(X) = e^{-H_{\infty}(X)}.
\end{equation}
In particular, considering the R\'enyi entropy power $N_{\alpha}(X) = e^{2H_{\alpha}(X)}$, Theorem \ref{LC} yields the bound
\begin{equation}\label{entropy}
N_{\alpha}(X) \geq N_{\infty}(X) \geq 1 + \Var(X),
\end{equation}
for arbitrary discrete log-concave random variable $X$, where the first inequality holds by monotonicity of R\'enyi entropy.

\section{The Littlewood-Offord problem for discrete log-concave distributions and an entropic version}\label{2}

Throughout this section, given random variables $X_1, \dots, X_n$, we denote $X=(X_1, \dots,X_n)$. The first step in establishing the Littlewood-Offord problem for log-concave distributions is to reduce the problem to signs.

\begin{thm}\label{thm: majorization by signs}

For $a_i \in \mathbb{R} \setminus \{0\}$, and $X_i$ independent, log-concave, $\mathbb{Z}$-valued random variables taking finitely many values, there exist signs $v_i \in \pm 1$ such that $a \cdot X \prec v \cdot X$.

\end{thm}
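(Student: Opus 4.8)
The plan is to reduce $a \cdot X$ to $v \cdot X$ in two moves, using the two majorization tools already in place (Lemma~\ref{lem: majorization lemma} on pushforwards and Theorem~\ref{rearrance-2} on rearrangements of log-concave convolutions), and exploiting the fact that majorization $\prec$ is transitive and compatible with adding an independent summand. First I would normalize: by rescaling each $a_i$ (which does not change any of the distributions in question up to relabeling, and majorization is invariant under the ambient set on which the masses sit) there is no loss in assuming the $a_i$ are such that the analysis is cleanest; the real content is to compare the distribution of $\sum_i a_i X_i$ with that of $\sum_i \pm X_i$. The key observation is that $\sum_i a_i X_i = f(a_1 X_1, \dots, a_n X_n)$ is a deterministic function (the summation map $f(y_1,\dots,y_n) = \sum y_i$) applied to the random vector whose coordinates are the $a_i X_i$. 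However, Lemma~\ref{lem: majorization lemma} is stated for a single random variable, so I would instead apply it coordinate-wise or, more precisely, observe that what we really want is to first replace each $a_i X_i$ by $X_i$ up to sign and then sum.

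The cleaner route, which I expect to be the intended one, is: let $v_i = \mathrm{sign}(a_i)$ and write $|a_i| = c_i > 0$. Then $a_i X_i$ has the same distribution as $c_i (v_i X_i)$, and $v_i X_i$ is again a $\mathbb{Z}$-valued log-concave random variable taking finitely many values whose squeezed rearrangement $(v_i X_i)^\#$ equals $X_i^\#$ (reflection and the sign only permute the support, and $X_i^\#$ is log-concave since $X_i$ is log-concave with an interval support, so $X_i = X_i^\#$ up to reindexing). Now I would like to invoke a rearrangement/majorization inequality for the scaled sum $\sum_i c_i (v_i X_i)$. The point is that the support of $c_i(v_i X_i)$ is an arithmetic-progression-like set, and spreading the masses out along $c_i \mathbb{Z}$ versus along $\mathbb{Z}$ can only ``de-concentrate'' the distribution: the collision probabilities go down because distinct lattices $\sum c_i \mathbb{Z}$ overlap less than $\mathbb{Z}$. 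Formally, I would argue that $\sum_i c_i(v_i X_i) \prec \sum_i (v_i X_i)$ by exhibiting, for each atom $x$ of the left side, a grouping of atoms of the right side with total mass at least as large — i.e. a doubly stochastic transfer — using that the map collapsing $\sum c_i \mathbb{Z}$ onto $\mathbb{Z}$ (sending $\sum c_i j_i \mapsto \sum j_i$, well-defined on the relevant finite support after clearing denominators) only merges atoms, hence by Lemma~\ref{lem: majorization lemma} again $\sum_i c_i (v_i X_i) \prec f\big(\sum_i c_i(v_iX_i)\big)$ for the appropriate collapsing function $f$, and that collapsed variable is exactly $\sum_i v_i X_i = v \cdot X$.

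Carrying this out: clear denominators so that all $c_i$ are (coprime) positive integers, and consider $g(y_1, \dots, y_n) = \sum_i y_i / c_i$ — no, rather the right bookkeeping is to note $\sum_i c_i(v_i X_i) = h(v_1 X_1, \dots, v_n X_n)$ where $h(w_1,\dots,w_n) = \sum_i c_i w_i$, and separately $v \cdot X = s(v_1 X_1,\dots,v_nX_n)$ with $s(w_1,\dots,w_n)=\sum_i w_i$; since $h$ factors through $s$ is false in general, the correct statement is that there is a function $\pi$ with $h = \pi \circ (\text{identity})$... this is the delicate point. The honest and robust argument is: $\sum c_i (v_i X_i) \prec \sum (v_i X_i)$ because the former is obtained from the random vector $(v_1X_1,\dots,v_nX_n)$ via the map $h$, the latter via $s$, and there is a deterministic map $\phi$ on the (finite) range of $s$ with... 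Instead, the clean fact to prove directly is $\sum_i c_i Z_i \prec \sum_i Z_i$ for independent integer-valued $Z_i = v_iX_i$ with $c_i \in \mathbb{Z}_{>0}$, which follows because $\sum_i c_i Z_i$ is a function of $(Z_1,\dots,Z_n)$ that is \emph{refined} by $\sum_i Z_i$: whenever $\sum c_i z_i = \sum c_i z_i'$ we need not have $\sum z_i = \sum z_i'$, so this direction is backwards. Therefore the genuinely correct and only-slightly-subtle step is the reverse: one shows $a\cdot X = (\sum_i c_i (v_iX_i))$ and then that $\sum_i v_i X_i \succ \sum_i c_i(v_iX_i)$ fails too — so one must use Theorem~\ref{rearrance-2} more carefully, applying it to the variables $Y_i := c_i(v_iX_i)$ whose squeezed rearrangements $Y_i^\#$ are log-concave (they are supported on $c_i\mathbb{Z}$ restricted to an interval, and squeezing gives back $X_i^\#$, log-concave), yielding $\sum_i Y_i \prec \sum_i Y_i^\# = \sum_i (v_iX_i)^\# = \sum_i X_i^\#$; but we want $v\cdot X$, not $\sum X_i^\#$. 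So finally: apply Theorem~\ref{rearrance-2} to $Y_i = c_i v_i X_i$ to get $a\cdot X = \sum Y_i \prec \sum Y_i^\#$, then note $Y_i^\#$ is a relabeling of $v_i X_i$ (same multiset of probabilities, arranged log-concavely), and apply Theorem~\ref{rearrance-2} once more in reverse... The cleanest statement, which I will use: by Theorem~\ref{rearrance-2} applied to $\{c_i v_i X_i\}_i$, we get $a \cdot X \prec \sum_i X_i^{\#}$, and then choosing $v_i$ so that $v_i X_i$ also rearranges to $X_i^\#$ — possible since $X_i$ log-concave means $X_i^\#$ is a cyclic/monotone relabel of $X_i$ itself, and with the right sign $v_i X_i$ has nonincreasing-then-nondecreasing mass so $\sum v_i X_i$ and $\sum X_i^\#$ have the same distribution up to translation — gives $a\cdot X \prec v\cdot X$. \textbf{The main obstacle} is precisely pinning down this last identification: showing that one can choose the signs $v_i$ so that the law of $\sum_i v_i X_i$ coincides (up to an integer shift, which does not affect $\prec$) with the law of $\sum_i X_i^\#$, i.e. that squeezing a log-concave mass function is the same as reflecting it about its mode, which requires that a finitely-supported log-concave sequence is, after an index shift, nonincreasing — false in general, it is unimodal — so in fact $X_i^\#$ need not equal $v_iX_i$ in law, and the correct fix is to observe $X_i^\# \prec v_i X_i$ is also false; hence the theorem's proof genuinely needs the one-line combination ``$a\cdot X \prec \sum_i (c_i v_i X_i)^\# = \sum_i X_i^\#$ and $\sum_i X_i^\# \prec v\cdot X$ for suitable signs,'' where the second majorization is the substantive claim and is exactly where the log-concavity of the $X_i^\#$ (needed to apply Theorem~\ref{rearrance-2} to the $v_iX_i$) is spent.
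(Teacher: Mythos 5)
Your overall plan (reduce to the rearrangement theorem, Theorem~\ref{rearrance-2}, with Lemma~\ref{lem: majorization lemma} as an auxiliary tool) is the right one, but as written the argument has two genuine gaps. First, the reduction from real to integer coefficients is missing. "Clearing denominators" only makes sense when the $a_i$ are rational; for rationally independent irrational coefficients no common rescaling produces integers, and rescaling the $a_i$ individually changes the law of the sum. The paper's proof handles this by taking a $\mathbb{Q}$-linear map $T\colon\mathbb{R}\to\mathbb{Q}$ with $T(a_i)\in\mathbb{Z}\setminus\{0\}$, observing that $T(a\cdot X)=T(a)\cdot X$ because the $X_i$ are integer valued, and invoking Lemma~\ref{lem: majorization lemma} to get $a\cdot X \prec T(a)\cdot X$. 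Without some step of this kind, the squeezed rearrangement $(a_iX_i)^{\#}$ you want to feed into Theorem~\ref{rearrance-2} is not even defined, since $\#$ is defined only for $\mathbb{Z}$-valued variables.

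Second, the concluding step is based on a misreading of the squeezing operation, and the "main obstacle" you end with is an artifact of that misreading. Squeezing collapses the gaps in the support while preserving the order of the support points; it does not reorder or reflect masses. Hence for an integer coefficient $b_i\neq 0$, $(b_iX_i)^{\#}$ is a translate of $X_i$ when $b_i>0$ and a translate of $-X_i$ when $b_i<0$ (multiplication by a negative number reverses the order of the support); it is not $X_i^{\#}$ in general. With the correct identification, Theorem~\ref{rearrance-2} applied to the variables $b_iX_i$ gives in one step $b\cdot X \prec \sum_i (b_iX_i)^{\#}$, and the right-hand side has the law of $v\cdot X$ up to translation with $v_i=\mathrm{sign}(b_i)$; translations do not affect majorization, so no further comparison is needed and the signs fall out automatically. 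Your claim $(c_iv_iX_i)^{\#}=X_i^{\#}$ would instead yield $a\cdot X\prec\sum_i X_i^{\#}$, i.e.\ majorization by the all-plus-signs sum, which is false in general: take $n=2$, $X_1,X_2$ i.i.d.\ Bernoulli($p$) with $0<p<1/3$ and $a=(1,-1)$; then $M(X_1-X_2)=p^2+(1-p)^2>(1-p)^2=M(X_1+X_2)$, so $X_1-X_2\not\prec X_1+X_2$. Consequently the final "substantive claim" you isolate, $\sum_i X_i^{\#}\prec v\cdot X$ for suitable signs, is both left unproven and not the statement that would complete the argument; as it stands the proposal is not a proof, though it becomes one once the two corrections above are made.
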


\begin{proof}
Let us first observe that $a \cdot X$ can be majorized by $\tilde{a} \cdot X$, for set of constants $\tilde{a}_i \in \mathbb{Z} \setminus \{0\}$. Indeed, it has been observed in \cite[Proof of Lemma 5.1]{MMR} that one may construct a linear map $T \colon \R \to \mathbb{Q}$ such that $T(a_i) \in \mathbb{Z} \setminus \{0\}$ for all $i$. Thus, by Lemma \ref{lem: majorization lemma}, $a \cdot X \prec T( a\cdot X) $. Further, since the $X_i$'s are integer-valued, one has
\begin{align}
T(a_1 X_1 + \cdots + a_n X_n)
    = T(a_1) X_1 + \cdots + T(a_n) X_n.
\end{align}
Writing $T(a) = (T(a_1), \dots, T(a_n)) \in (\mathbb{Z} \setminus \{0\} )^n$, we thus have $a \cdot X \prec T(a) \cdot X$.

Observe that our result follows from Theorem \ref{rearrance-2} since $T(a) \cdot X \prec (T(a_1) X_1)^{\#} + \cdots + (T(a_n) X_n )^{\#}$. Indeed, $(T(a_i) X_i)^{\#} = v_i X_i$ where $v_i = sign(T(a_i))$. Since $v_i X_i$ is log-concave one may apply Theorem \ref{rearrance-2} and we have $a \cdot X \prec T(a) \cdot X \prec v \cdot X$ where $v_i = \pm 1$ and the result follows.
\end{proof}

Since $\alpha$-R\'enyi entropy is Schur concave as a consequence of \cite[Proposition 3-C.1]{MOA} (see also \cite{MWW}),
%(Appendix F.3.a (p. 562)
the proof of Theorem \ref{main2} follows immediately.

\begin{proof}[Proof of Theorem \ref{main2}]
Theorem \ref{thm: majorization by signs} combined with Schur concavity of R\'enyi entropy yields that for $\alpha \in [0,\infty]$, $X = (X_1, \dots, X_n)$ with $X_i$ independent and log-concave  then for all $a = (a_1, \dots, a_n)$ with $a_i \in \mathbb{R} \setminus \{0\}$, there exists $v_i = \pm 1$ such that $v = (v_1, \dots, v_n)$ implies
\begin{align}
N_\alpha( a \cdot X) \geq N_\alpha( v \cdot X).
\end{align}
Moreover the choice of signs is independent of the $X_i$ determined only by the coefficients $a_i$.
Therefore, using \eqref{entropy},
$$ N_\alpha( a \cdot X) \geq N_\alpha( v \cdot X) \geq 1 +\Var(v \cdot X) = 1 + \sum_{i=1}^n \Var(X_i). $$
It has been shown in \cite{BMM} that when $1 < \alpha \leq 2$, $N_{\alpha}(X) \geq 1 + 4\Var(X)$ for arbitrary log-concave $X$, while $N_{\alpha}(X) \geq 1 + 2\Var(X)$ when $X$ is symmetric about a point and log-concave. This concludes the proof.    
\end{proof}

\begin{remark}\label{sharp}

Theorem \ref{main2} is sharp up to an absolute constant. This is a consequence of the bound $N_{\alpha}(X) \leq 1 + \frac{4(3 \alpha - 1)}{\alpha - 1} \Var(X)$ proved in \cite{BMM}, which holds for all $\alpha > 1$. Therefore,
$$ \inf_{a \in (\R \setminus \{0\})^n} N_\alpha(a \cdot X) \leq 1 + \frac{4(3 \alpha - 1)}{\alpha - 1} \sum_{i=1}^n \Var(X_i). $$
To obtain a better estimate when $\alpha$ tends to 1, one may use the following well-known upper bound for the discrete entropy, $N(X) \leq \frac{2 \pi e}{12} + 2\pi e \Var(X)$ (see, e.g., \cite{Massey}, \cite{BM}), which yields for all $\alpha \geq 1$,
$$ \inf_{a \in (\R \setminus \{0\})^n} N_\alpha(a \cdot X) \leq \frac{2 \pi e}{12} + 2 \pi e\sum_{i=1}^n \Var(X_i). $$
    
\end{remark}

Specializing Theorem \ref{main2} to $\alpha = +\infty$, and recalling \eqref{member}, we obtain Theorem \ref{main1}:
\begin{equation}\label{M-function}
\sup_{a \in (\R \setminus \{0\})^n} M(a \cdot X) \leq \frac{1}{\sqrt{1 + \sum_{i=1}^n \Var(X_i)}},
\end{equation}
holding for arbitrary independent log-concave random variables $X_1, \dots, X_n$. In particular, we deduce the following.

\begin{prop}\label{answer}

Let $X_1, \dots, X_n$ be i.i.d. Bernoulli distribution of parameter $p$. Then,
$$ \sup_{a \in (\R \setminus \{0\})^n} M(a \cdot X) \leq \frac{1}{\sqrt{1 + np(1-p)}}. $$

\end{prop}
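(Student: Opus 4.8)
The plan is to deduce this immediately from Theorem \ref{main1}, which was established above as the $\alpha = +\infty$ specialization of Theorem \ref{main2}. First I would verify the hypotheses: a Bernoulli random variable $X$ of parameter $p \in (0,1)$ is finitely supported and discrete log-concave, since its mass function is supported on the integer interval $\{0,1\}$ and the inequality $p(j)^2 \geq p(j-1)p(j+1)$ holds trivially for all $j \in \mathbb{Z}$ (on $\{0,1\}$ at least one neighbor has zero mass). Thus $X_1, \dots, X_n$ are independent, discrete log-concave, and finitely supported, so Theorem \ref{main1} applies with $c = 1$.

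It then remains only to insert the variances. For a Bernoulli$(p)$ variable one has $\Var(X_k) = p(1-p)$, whence $\sum_{k=1}^n \Var(X_k) = np(1-p)$, and \eqref{1} gives
$$ \sup_{a \in (\R \setminus \{0\})^n} M(a \cdot X) \leq \frac{1}{\sqrt{1 + np(1-p)}}, $$
which is the claim. There is no genuine obstacle here; the proposition is simply the explicit form of Theorem \ref{main1} in the i.i.d.\ Bernoulli case. (For $p = 1/2$ the $X_k$ are symmetric about a point, so the sharper constant $c = 2$ from Theorem \ref{main1} would instead yield $1/\sqrt{1 + n/4}$, recovering the classical $O(1/\sqrt{n})$ rate of \eqref{erdos}; likewise the constant $c = 2$ from \cite{MMR} is available for arbitrary $p$, but $c = 1$ already suffices for the stated bound.)
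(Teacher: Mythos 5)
Your proposal is correct and follows exactly the paper's route: the proposition is stated there as an immediate consequence of Theorem \ref{main1} (the $\alpha=+\infty$ case of Theorem \ref{main2}), with $\Var(X_k)=p(1-p)$ inserted. Your verification that Bernoulli laws are finitely supported and discrete log-concave is a harmless extra detail the paper leaves implicit.
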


As mentioned in the introduction, in this specific case of Bernoulli distribution, a refined argument has been used in \cite{MMR} to show that 
\begin{equation}\label{answer-2}
\sup_{a \in (\R \setminus \{0\})^n} M(a \cdot X) \leq \frac{1}{\sqrt{1 + 2np(1-p)}}.
\end{equation}

We note that one may provide a unification of both Erdos' result of the Littlewood-Offord problem \eqref{erdos} and \eqref{answer-2}.

\begin{prop}\label{extend}

Let $X_1, \dots, X_n$ be independent random variables such that for each $i \in \{1, \dots, n\}$, $X_i \in \{x_i, x_{i+1}\}$, where $x_i, x_{i+1} \in \Z$ with $x_i \leq x_{i+1}$ and 
$$ \Prob(X_i = x_i) = 1 - \Prob(X_i = x_{i+1}) = 1-\theta_i, \qquad \theta_i \in (0,1). $$
Then,
$$ \sup_{a \in (\R \setminus \{0\})^n} M(a \cdot X) \leq \frac{1}{\sqrt{1 + 2\sum_{i=1}^n \frac{\Var(X_i)}{(x_i-x_{i+1})^2}}}. $$

\end{prop}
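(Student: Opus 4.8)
The plan is to reduce the statement to the Littlewood--Offord inequality for Bernoulli variables with \emph{arbitrary} parameters, namely the bound \eqref{1} with $c=2$ established in the Bernoulli case in \cite{MMR}. We may assume $x_i < x_{i+1}$ for every $i$: if $x_i = x_{i+1}$, then $X_i = x_i$ is deterministic, the term $\Var(X_i)/(x_i-x_{i+1})^2$ does not contribute (it equals $0$ and we discard it), and $a_iX_i$ only affects the constant shift below, so such an index can be dropped. Set $d_i := x_{i+1}-x_i \in \{1,2,\dots\}$ and write $X_i = x_i + d_i B_i$, where $B_i$ is a Bernoulli random variable with $\Prob(B_i=1)=\theta_i$ and $\Prob(B_i=0)=1-\theta_i$; the $B_i$ are independent because the $X_i$ are.

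Next I would use that the $M$-functional is invariant under translation. For any $a \in (\R\setminus\{0\})^n$,
\[
 a\cdot X = \sum_{i=1}^n a_i x_i + \sum_{i=1}^n (a_i d_i) B_i ,
\]
and since $\sum_i a_i x_i$ is a constant, this gives $M(a\cdot X) = M\big(\sum_{i=1}^n b_i B_i\big)$ with $b_i := a_i d_i$. Because $a_i \neq 0$ and $d_i \geq 1$, we have $b = (b_1,\dots,b_n) \in (\R\setminus\{0\})^n$, hence
\[
 \sup_{a \in (\R\setminus\{0\})^n} M(a\cdot X) \;\leq\; \sup_{b \in (\R\setminus\{0\})^n} M\Big(\sum_{i=1}^n b_i B_i\Big).
\]

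Finally, applying the Bernoulli case of the Littlewood--Offord problem with $c=2$ from \cite{MMR} to the independent Bernoulli variables $B_i$ (with parameters $\theta_i \in (0,1)$),
\[
 \sup_{b \in (\R\setminus\{0\})^n} M\Big(\sum_{i=1}^n b_i B_i\Big) \;\leq\; \frac{1}{\sqrt{1 + 2\sum_{i=1}^n \Var(B_i)}} .
\]
It then remains only to rewrite the right-hand side in terms of the original variables: from $X_i = x_i + d_i B_i$ we get $\Var(X_i) = d_i^2\,\Var(B_i)$, so $\Var(B_i) = \Var(X_i)/d_i^2 = \Var(X_i)/(x_i-x_{i+1})^2$, which yields the claimed inequality. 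There is no genuine obstacle here: the whole content is carried by the cited Bernoulli bound, and the only points needing a little care are the degenerate indices with $x_i=x_{i+1}$ and checking that the rescaled coefficients $b_i$ remain nonzero. (Specializing to $\{x_i,x_{i+1}\}=\{-1,1\}$ with $\theta_i=1/2$ recovers \eqref{erdos} up to an absolute constant, while $\{x_i,x_{i+1}\}=\{0,1\}$ with $\theta_i=p$ recovers \eqref{answer-2}, which is the announced unification.)
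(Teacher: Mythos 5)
Your proof is correct, and it reaches the conclusion by a somewhat different (and more direct) route than the paper. The paper argues as in its Theorem \ref{thm: majorization by signs}: it majorizes $a \cdot X$ by a sum of \emph{rearranged} variables, which turns each two-point variable into a Bernoulli of parameter $\theta_i$ or $1-\theta_i$ according to the sign of $a_i$, so that $M(a\cdot X) \leq M(B_1 + \cdots + B_n)$; it then only needs the unweighted (all coefficients equal to $1$) case of the Bernoulli bound with constant $2$ from \cite{MMR}, plus the same variance identity $\Var(B_i) = \Var(X_i)/(x_{i+1}-x_i)^2$ you use. You instead bypass the majorization machinery entirely: writing $X_i = x_i + (x_{i+1}-x_i)B_i$ and using translation invariance of $M$, you convert $a\cdot X$ into a weighted Bernoulli sum $b \cdot B$ with $b_i = a_i(x_{i+1}-x_i) \neq 0$, and then invoke the full arbitrary-coefficient, arbitrary-parameter form of the \cite{MMR} bound \eqref{answer-2}. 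Both arguments are valid; yours is shorter and more elementary but leans on the stronger (weighted) version of the cited theorem, whereas the paper's route stays consistent with its general majorization framework and only consumes the equal-coefficients case. Your handling of the degenerate indices $x_i = x_{i+1}$ (dropping deterministic variables, whose contribution to the sum is a constant shift) is a reasonable reading of a case in which the stated bound is formally ill-posed, and does not affect the argument.
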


\begin{proof}
Using the same argument as in the proof of Theorem \ref{thm: majorization by signs}, one may deduce that for all $a \in (\R \setminus \{0\})^n$,
$$ M(a \cdot X) \leq M(\mathbbm{1} \cdot B), $$
where $B_1, \dots, B_n$ are independent Bernoulli distributions of parameter $\theta_i$ or $1-\theta_i$ depending on the sign of $a_i$.
%assume that the $a_i$'s are integer valued. Denote $Y_i = a_i X_i$, and note that $Y_i^{\#}$ is a Bernoulli distribution of parameter $\theta_i$ if $a_i \geq 0$ and of parameter $1-\theta_i$ if $a_i \leq 0$. 
%In any case, $\Var(B_i) = $.
The result follows by using the bound
$$ M(\mathbbm{1} \cdot B) \leq \frac{1}{\sqrt{1 + 2\sum_{i=1}^n \Var(B_i)}}, $$
which is a consequence of \eqref{answer-2}, and by noting that
$$ \Var(B_i) = \theta_i(1-\theta_i) = \frac{\Var(X_i)}{(x_{i+1} - x_i)^2}. $$
\end{proof}

The Littlewood-Offord solution \eqref{erdos}, as well as Proposition \ref{answer} immediately follow from Proposition \ref{extend}. Note that one may even allow Rademacher $\pm 1$ distributions with arbitrary parameter $p \in (0,1)$, which yields the same inequality as for Bernoulli distributions. We state this result in the next corollary.

\begin{coro}

If the $X_i$'s are independent Rademacher distributions with arbitrary parameter $p \in (0,1)$, then
$$ \sup_{a \in (\R \setminus \{0\})^n} M(a \cdot X) \leq \frac{1}{\sqrt{1 + 2np(1-p)}}. $$

\end{coro}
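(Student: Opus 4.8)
The plan is to reduce the Rademacher case to the Bernoulli case already handled by Proposition \ref{extend}, exploiting the fact that an affine change of variables does not affect the quantity $M(a \cdot X)$ in any essential way. Concretely, suppose each $X_i$ is a Rademacher random variable with parameter $p_i \in (0,1)$, meaning $\Prob(X_i = 1) = p_i = 1 - \Prob(X_i = -1)$. Fix $a \in (\R \setminus \{0\})^n$. Writing $X_i = 2 B_i - 1$ where $B_i$ is Bernoulli with parameter $p_i$ (supported on $\{0,1\} \subset \Z$), we get
\[
a \cdot X = \sum_{i=1}^n a_i(2B_i - 1) = 2 (a \cdot B) - \sum_{i=1}^n a_i,
\]
so $a \cdot X$ is an affine image of $a' \cdot B$ with $a' = 2a$; hence $M(a \cdot X) = M(a' \cdot B) = \sup_{x} \Prob(a' \cdot B = x)$, since shifting by a constant and replacing $a$ by $2a$ only relabels the atoms. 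The $B_i$ are independent discrete log-concave (indeed Bernoulli) random variables, each supported on two consecutive integers $x_i = 0$, $x_{i+1} = 1$, so Proposition \ref{extend} applies directly.

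Next I would just feed this into the bound of Proposition \ref{extend}. With $x_{i+1} - x_i = 1$ we have $\Var(B_i) = \Var(X_i)/(x_{i+1}-x_i)^2 = \Var(B_i)$, and more to the point $\Var(B_i) = p_i(1-p_i)$, while $\Var(X_i) = \Var(2B_i - 1) = 4 p_i (1 - p_i) = 4 \Var(B_i)$. Proposition \ref{extend}, applied to the family $B_1, \dots, B_n$, gives
\[
\sup_{a' \in (\R \setminus \{0\})^n} M(a' \cdot B) \leq \frac{1}{\sqrt{1 + 2 \sum_{i=1}^n \Var(B_i)}} = \frac{1}{\sqrt{1 + 2 \sum_{i=1}^n p_i(1-p_i)}}.
\]
Since $a \mapsto 2a$ is a bijection of $(\R \setminus \{0\})^n$, taking the supremum over $a$ on the left-hand side yields the same bound for $\sup_a M(a \cdot X)$. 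Specializing to the i.i.d.\ case $p_i = p$ gives $\sum_i p_i(1-p_i) = n p(1-p)$ and hence the claimed inequality.

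There is really no hard obstacle here: the only thing to be careful about is the bookkeeping of the affine transformation, namely checking that $M$ is invariant under $X \mapsto cX + b$ for $c \neq 0$ and $b$ constant (which is immediate from the definition \eqref{M}, as the map $x \mapsto cx + b$ is a bijection of $\R$), and that this transformation is compatible with the linear structure, i.e.\ $a \cdot (2B - \mathbbm{1}) = 2(a \cdot B) - a \cdot \mathbbm{1}$. One could alternatively invoke Proposition \ref{extend} with a generic two-point support $\{x_i, x_{i+1}\} = \{-1, 1\}$ and use the $1/(x_{i+1}-x_i)^2 = 1/4$ factor directly, but this requires the support points to be integers, and $\{-1,1\}$ are integers, so that route works equally well and perhaps more cleanly. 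Either way the corollary follows in a few lines.
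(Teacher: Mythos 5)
Your proof is correct and follows essentially the paper's route: the corollary is an immediate application of Proposition \ref{extend}, whether you first pass to Bernoulli variables via $X_i = 2B_i - 1$ (using the affine invariance of $M$) or, as you note at the end, apply the proposition directly to the support $\{-1,1\}$ with the factor $(x_{i+1}-x_i)^2 = 4$; both give $\Var(X_i)/(x_{i+1}-x_i)^2 = p(1-p)$ and hence the stated bound.
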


\section{Comparison with general bounds on the concentration function}\label{sec-4}

The goal of this section is to demonstrate that our upper bound on
$$ M(X) = \sup_{x \in \mathbb{R}} \mathbb{P}(X = x) $$
given in equation \eqref{M-function}, specialized to $a=(1, \dots, 1)$, provides better information compared with existing results in the literature on concentration function. Let us recall that the concentration function of a random variable $X$ is defined as
$$ Q(X ; \lambda) = \sup_x \Prob(x \leq X \leq x + \lambda), \qquad \lambda \geq 0. $$
Note that for $X$ integer valued and $\lambda < 1$,
\[
    Q(X; \lambda) = M(X).
\]
A general bound was established by Miroshnikov and Rogozin \cite{M-R}.

\begin{thm}[Miroshnikov-Rogozin \cite{M-R}]
    There exists a universal constant $C > 0$ such that $2 \lambda \geq \lambda_k$ and $X_k$ independent random variables with $S = X_1 + \cdots + X_n$,
    \[
        Q(S; \lambda) \leq C \lambda \left( \sum_{k=1}^n \mathbb{E}\left( |X_k^s| \wedge \frac{\lambda_k}{2} \right)^2 Q^{-2}(X_k, \lambda_k) \right)^{-\frac 1 2},
    \]
    where given $X$, $X^s = X - X'$ and $X'$ is an independent copy of $X$.
\end{thm}

When $X_k \sim$ Bernoulli($p_k$), note that $|X_k^s| \sim$ Bernoulli$(2p_k(1-p_k))$ so that for $\lambda < 1$,
\[
    M(S) = Q(S;\lambda) \leq C \lambda \left( \sum_{k=1}^n  \frac{\lambda_k^2 \,2 p_k(1-p_k) }{4}  M^{-2}(X_k) \right)^{-\frac 1 2}.
\]
Minimizing the right hand side with $\lambda_k = 2 \lambda$ we have
\begin{equation}\label{MR}
    M(S) \leq C \left( \sum_{k=1}^n \frac{2 p_k (1-p_k)}{(p_k \vee (1-p_k))^2}\right)^{- \frac 1 2} = C \left( \sum_{k=1}^n \frac{ 2 \Var(X_k)}{(p_k \vee (1-p_k))^2} \right)^{- \frac 1 2}.
\end{equation}
\iffalse
Since $\frac 1 2 \leq p \vee (1-p) \leq 1$ for $p \in [0,1]$, we have that for $\lambda < 1$ 
\begin{equation}\label{MR}
    Q(S;\lambda) = M(S) \leq \frac{C}{\sqrt{2}} \left( \sum_{k=1}^n \Var(X_k) \right)^{-\frac 1 2} \leq  2 C \lambda \left( \sum_{k=1}^n \mathbb{E} \left( |X_k^s | \wedge \frac{\lambda_k}{2} \right)^2 Q^{-2}(X_k, \lambda_k) \right)^{- \frac 1 2}  
\end{equation}
for any choice of $\lambda_k \leq 2 \lambda$.
\fi
Since $\frac 1 2 \leq p \vee (1-p) \leq 1$ for $p \in [0,1]$, the right-hand side of \eqref{MR} is at least $\frac{C}{\sqrt{2}} \left( \sum_{k=1}^n \Var(X_k) \right)^{-\frac 1 2}$. Therefore for Bernoulli distributions, even after optimizing, the Miroshnikov-Rogozin yields the bound
$$ M(S) \leq \frac{\widetilde{C}}{\sqrt{\Var(S)}} $$
for some absolute constant $\widetilde{C} > 0$. 

Note that in the absence of an explicit constant $\widetilde{C}$, this inequality is only interesting in the regime that $\Var(S) \to \infty$, where central limit theorems are often more viable.  In particular, it yields trivial results in the Poisson regime, namely $M(S) \leq O(1)$, when the variance of $S$ can be bounded away from zero. In regimes where the variance tends to $0$, the result is of an order worse than the trivial $M(S) \leq 1$. Whereas, our bounds derived in Section \ref{2} provide meaningful quantitative estimates in all regimes.

In more recent work, explicit constants have been obtained for a variant of the Miroshnikov-Rogozin inequality. To this end, for $\theta$ a unit vector in $\mathbb{R}^n$ define
\[
    p_\theta(t) = \vol_{n-1} \left\{ x \in \R^n : \|x\|_\infty \leq \frac 1 2, \langle x, \theta \rangle = t \right\}.
\]

\begin{thm}[Bobkov-Chistyakov \cite{BC}]
    Given $\lambda \geq \left(\sum_{k=1}^n \lambda_k^2 \right)^{\frac 1 2}$ and $X_k$ independent random variables with $S = X_1 + \cdots + X_n$,
    \[
        Q(S; \lambda) \leq  \frac{ 2^{\frac 3 2} \lambda}{c}   \left( \sum_{k=1}^n \lambda_k^2 Q^{-2}(X_k; \lambda_k) \right)^{- \frac 1 2},
    \]
    where  $$c \coloneqq  \inf_{|t| <\frac 1 2, \|\theta\| = 1 } p_\theta(t).$$ 
\end{thm}
Moreover, Bobkov and Chistyakov showed that $c \geq 0.00095$ independent of dimension.
Later Melbourne, Tkocz, and Wyczesany \cite{MTW} showed that the body $\{ x \in \R^n : \|x\|_\infty \leq \frac 1 2 \}$ can be replaced by any isotropic convex body $K$.  More precisely for a convex body $K$ such that  $\int_K x_i x_j dx = L_K^2 \delta_{ij}$,  for some constant $L_K > 0$, 
\[
    \inf_{\theta, |t| < L_K \sqrt{3}}  \vol_{n-1} \left \{ x \in K : \langle x , \theta \rangle = t \right \} \geq \frac{e^{- \sqrt{6}}}{\sqrt{2 L_K^2}}.
\]
This result is proven sharp for high dimensional double cones, but even in the case of the cube where $L_K = \frac{1}{\sqrt{12}}$ it gives $c \geq \sqrt{6} e^{- \sqrt{6}}$, so that Bobkov and Chistyakov can be stated as 

    \[
        Q(S; \lambda) \leq  \frac{2 e^{\sqrt{6}}}{\sqrt{3}} \lambda  \left( \sum_{k=1}^n \lambda_k^2 Q^{-2}(X_k; \lambda_k) \right)^{- \frac 1 2}.
    \]
For reference, numerically $\frac{2 e^{\sqrt{6}}}{\sqrt{3}} \ \approx 13.3742$. Hence for $C =\frac{2 e^{\sqrt{6}}}{\sqrt{3}} $, $X_k$ integer valued and $\sum_{k=1}^n \lambda_k^2 \leq \lambda^2 < 1$, by Bobkov-Chistyakov
\[
    M(S) = Q(S; \lambda) \leq C \lambda \left( \sum_{k=1}^n \lambda_k^2 Q^{-2}(X_k; \lambda_k ) \right)^{- \frac 1 2} = C \lambda \left( \sum_{k=1}^n \lambda_k^2 M^{-2}(X_k )\right)^{- \frac 1 2}.
\]
Minimizing the right-hand side over $\lambda_k$, that is when $\lambda_j = \lambda$ for $j$ such that $M^{-2}(X_j) = \max_{1 \leq k \leq n} M^{-2}(X_k)$ and $\lambda_l = 0$ for $l \neq j$, gives
\[
    M(S) \leq C \left( \max_{1 \leq k \leq n} M^{-2}(X_k) \right)^{- \frac 1 2} = C \min_{1 \leq k \leq n} M(X_k).
\]
However, this is a trivial bound as Young's convolution inequality yields 
$$ M(S) \leq \min_{1 \leq k \leq n} M(X_k). $$

\section{A Littlewood-Offord type problem for arithmetic progressions}\label{4}

Given independent $\mathbb{Z}$-valued random variables $X_1, \dots, X_n$, $n \geq 1$, and $a = (a_1, \dots, a_n) \in (\mathbb{R} \setminus \{0\} )^n$, we ask for an upper bound on 
\[
    \sup_{x \in \R} \, \mathbb{P}(a \cdot X \in A_{l,m}(x))
\]
where $A_{l,m}(x)$ is an arithmetic progression of length $l \geq 1$, that is $A_{l,m}(x) = \{x + mj \}_{j=1}^l$ for $m, x \in \mathbb{R}$.  In the case that $l = 1$ this corresponds to the classical Littlewood-Offord problem for the variables $X_k$.

\begin{proof}[Proof of Theorem \ref{main3}]
Let $Y$ be a discrete random variable independent of $U_l$, where $U_l$ is uniform on $\{1,2,\dots, l\}$. Then,
\begin{align*}
    \mathbb{P}(Y - m U_l = x) 
        &= 
            \sum_{k=1}^l \mathbb{P}(U_l = k, Y = x + mk ) 
                \\
        &=
            \frac 1 l \sum_{k=1}^l \mathbb{P}(Y = x +mk)
                \\
        &=
            \frac 1 l \mathbb{P}( Y \in A_{l,m}(x)).
\end{align*}
Thus, this Littlewood-Offord problem for arithmetic progressions is equivalent to finding upper bounds on $M(a \cdot X - m U_l)$. When the $X_k$'s are discrete log-concave, one may thus apply Theorem \ref{main1} to obtain
$$ \sup_{x \in \R} \, \mathbb{P}(a \cdot X \in A_{l,m}(x)) = l \, M(a \cdot X - m U_l) \leq \frac{l}{\sqrt{1 + c \left( \sum_{k=1}^n \Var(X_i) + \Var(U_l) \right)}}, $$
which is the desired result since $\Var(U_l) = (l^2-1)/12$.
\end{proof}

\begin{remark}
Let us note that Theorem \ref{main3} is sharp up to an absolute constant when taking supremum over all $m \neq 0$, as
\begin{equation*}
\sup_{m \neq 0} \sup_{a \in (\R \setminus \{0\})^n} \sup_{x \in \R} \, \Prob(a_1 X_1 + \cdots + a_n X_n \in A_{l,m}(x)) \geq \frac{l}{\sqrt{1 + 12 \sum_{k=1}^n \Var(X_k) + l^2-1}},
\end{equation*}
due to the lower bound in \eqref{M-bound}.
\end{remark}

\section{Specific case of the Bernoulli and uniform distributions}

\subsection{Bernoulli distribution}

This section focuses on strengthening Theorem \ref{main3} for the Bernoulli distribution. Let $U_l$ be uniform on $\{1, \dots, l\}$. It has been shown in \cite[Proof of Theorem 1.7]{MR} that for all $p \geq 2$ the Fourier transform of $U_l$ satisfies
\[
    \int_{- \frac 1 2}^{\frac 1 2} \left| \mathbb{E} e^{2 \pi i t U_l} \right|^p dt \leq \int_{- A}^{A} e^{- \pi (l^2 - 1) p t^2 /2} dt,
\]
where $A$ is determined implicitly through the equation
\begin{equation}\label{A-def}
    \int_{-A}^A e^{- \pi (l^2 - 1) t^2} dt = \int_{-\frac 1 2 }^{\frac 12} \left| \mathbb{E}e^{2 \pi i t U_l} \right|^2 dt = \frac 1 l.
\end{equation}
The first equality in \eqref{A-def} gives the implicit definition of $A$ (in terms of $l$), while the second equality comes from recalling that $U_l$ is uniform and using Parseval identity. Therefore,
\begin{equation}\label{A-1} \frac 1 {2\pi} \int_{-\pi}^{\pi} \left| \mathbb{E} e^{itU_l} \right|^p dt \leq 2  \int_{0}^{A} e^{- \pi (l^2 - 1)p t^2 /2} dt = \frac{2}{\sqrt{ \pi  (l^2-1) p}} \int_0^{\sqrt{\pi (l^2-1)A^2 p}} e^{-x^2/2} dx = 2A \Phi (c p),
\end{equation}
where $\Phi(x) \coloneqq \frac 1 {\sqrt{x}} \int_0^{\sqrt{x}} e^{-t^2/2} dt$ and $c = \pi (l^2 -1) A^2$. Note that 
\[
A = \frac{1}{\sqrt{ \pi(l^2 - 1) }} \cdot \operatorname{erf}^{-1}\left( \frac{\sqrt{l^2 - 1}}{l} \right),
\]
where $\operatorname{erf}(x) = \frac{2}{\sqrt{\pi}} \int_0^{x} e^{-t^2} dt$ is the error function. On the other hand, it has been shown in \cite[Theorem 2.8]{MMR} that if $X$ is a Bernoulli random variable with variance $\sigma^2$ and $q \geq 1$,
\begin{equation}\label{A-2}
        \frac 1 {2\pi} \int_{-\pi}^{\pi} \left| \mathbb{E} e^{itX} \right|^q dt \leq \frac 1 {\sqrt{6 \sigma^2 q}} \int_0^{\sqrt{6 \sigma^2 q}} e^{-t^2/2} = \Phi(6 \sigma^2 q).
\end{equation}
    
Let us now consider $X_1, \dots, X_n$ independent Bernoulli random variables with variance $\sigma_k^2$ and $U_l$ uniform on $\{1, \dots, l\}$, and denote $v=(\pm1, \dots, \pm 1)$ and $v_0=\pm 1$ any choice of signs, then
\begin{eqnarray*}
\|f_{\sum_{k=1}^n v_k X_k + v_0 U_l}\|_{\infty} & \leq & \|\widehat{f}_{\sum_{k=1}^n v_k X_k + v_0 U_l}\|_{1} \\ & = & \frac 1 { 2\pi } \int_{- \pi}^{\pi} \left| \mathbb{E} e^{i t (v \cdot X  + v_0 U_l)} \right| dt \\ & \leq & \frac{1}{2 \pi} \left(\int_{- \pi}^{\pi} \left|\mathbb{E} e^{it U_l}\right|^p dt \right)^{\frac 1 p} \prod_{k=1}^n \left(\int_{-\pi}^{\pi} \left| \mathbb{E} e^{it X_k} \right|^{q_k} dt \right)^{\frac 1 {q_k}} \\ & \leq &  \left( 2A \Phi (\pi (l^2 -1) A^2 p) \right)^{\frac{1}{p}} \, \prod_{k=1}^n \Phi \left( 6 \sigma_k^2 q_k \right)^{\frac{1}{q_k}},
\end{eqnarray*}
where we have used the Hausdorff-Young inequality, H\"older's inequality with $\frac 1 p + \sum_{k=1}^n \frac 1 {q_k} = 1$,  the independence of the variables, and the bounds \eqref{A-1}, \eqref{A-2}. Choosing $C = \pi (l^2-1)A^2 + 6 \sum_{k=1}^n \sigma_k^2$, and setting $q_k = \frac{C}{6 \sigma_k^2}  $ and $p = \frac{C}{\pi (l^2-1) A^2}$, then if $p \geq 2$ we have
\begin{align*}
        \sup_{x \in \R} \mathbb{P}(a \cdot X \in A_l)
            &\leq 
                l \, \|f_{\sum v_i X_i + v_0 U_l}\|_{\infty}
                    \\
            & \leq l (2A)^{\frac{1}{p}} \Phi (C) ^{\frac{1}{p}} \, \prod_{k=1}^n \Phi \left( C \right)^{\frac{1}{q_k}}\\
            &=
                l (2A)^{\frac{1}{p}} \Phi(C).
\end{align*}
Using the bound $\Phi(z) \leq \frac{1}{\sqrt{1+\frac{z}{3}}}$ holding for all $z > 0$ (see \cite[Lemma 2.9]{MMR})
we deduce
\begin{equation}\label{Bern}
\sup_{x \in \R} \mathbb{P}(a \cdot X \in A_l) \leq  (2A)^{\frac{1}{p}} \frac{l}{\sqrt{1 + 2 \sum_{k=1}^n \Var(X_k) + \frac{l^2-1}{12} 4 \pi A^2} }.
\end{equation}

\begin{remark}
    
Let us compare the bound \eqref{Bern} with theorem \ref{main3}. Since $2A \leq 1$, we have 
$$ (2A)^{\frac{1}{p}} \frac{l}{\sqrt{1 + 2 \sum_{k=1}^n \Var(X_k) + \frac{l^2-1}{12} 4 \pi A^2} } \leq \frac{l}{\sqrt{1 + 2 \sum_{k=1}^n \Var(X_k) + \frac{l^2-1}{12} 4 \pi A^2} }. $$

Therefore, for $l=1$, we recover the bound \eqref{answer-2} proved in \cite{MMR}. Moreover, for $l = 2$, one can check numerically that $4 \pi A^2 \geq 1$, so that the bound \eqref{Bern} is always better for Bernoulli distribution than Theorem \ref{main3}. Note that for fixed length $l$, the bound \eqref{Bern} is stronger as the variance grows.

However, the bound is not always applicable when $l \geq 2$, as $p = 1 + \frac{6 \sum_{k=1}^n \Var(X_k)}{\pi (l^2-1) A^2}$ needs to be greater than or equal to 2. Hence, one may not choose variances that are too small compared to the length $l$.

\end{remark}

\subsection{Discrete uniform distribution}

This section specializes to the uniform distribution. In particular, we recover and extend a result of Madiman and Woo \cite{MW} on the entropy power inequality for discrete uniform distributions.

Let $U$ be a uniform distribution on consecutive integers $\{a, \dots, b\}$, with $a,b \in \Z$. Denote $l = b-a+1$. Note that $\Var(U) = (l^2-1)/12$, therefore its Fourier transform satisfies
$$ \|\widehat{f_U}\|_2^2 = \frac{1}{l} = \frac{1}{\sqrt{1 + 12 \Var(U)}}, $$
where the first identity follows from Parseval. Therefore, using \cite[Lemma 2.7]{MMR}, we obtain for $n$ independent uniformly distributed random variables $U_1, \dots, U_n$
$$ \|\widehat{f}_{\sum_{k=1}^n U_k}\|_2^2 \leq \frac{1}{\sqrt{1 + 12 \sum_{k=1}^n \Var(U_k)}}. $$
Using the Parseval identity, this leads to
\begin{equation*}
H_2 \left( \sum_{k=1}^n U_k \right) = \log(\|p_{\sum_{k=1}^n U_k}\|_2^{-2}) = \log(\|\widehat{f}_{\sum_{k=1}^n U_k} \|_2^{-2}) \geq \frac{1}{2} \log \left( 1 + 12 \sum_{k=1}^n \Var(U_k) \right).
\end{equation*}
By monotonicity of entropy, we deduce that for all $\alpha \leq 2$,
\begin{equation}\label{unif}
H_{\alpha} \left( \sum_{k=1}^n U_k \right) \geq \frac{1}{2} \log \left( 1 + 12 \sum_{k=1}^n \Var(U_k) \right).
\end{equation}
We are now ready to prove Theorem \ref{EPI-unif}.

\begin{proof}[Proof of Theorem \ref{EPI-unif}]
    Let $\alpha \leq 2$. Denoting $\Delta_\alpha(X) = N_\alpha(X) - 1$, which reflects the variance better than the entropy power for discrete distributions, we deduce from \eqref{unif} that
\[
    \Delta_\alpha \left( \sum_{k=1}^n U_k \right) \geq 12 \sum_{k=1}^n \Var(U_k).
\]
However, we note that for a uniform random variable $U$ on an integer interval, $\Delta_\alpha(U) = 12 \ \Var(U)$, for any $\alpha$. Thus we have
\begin{equation}\label{bound-unif}
    \Delta_\alpha \left( \sum_{k=1}^n U_k \right) \geq \sum_{k=1}^n \Delta_\alpha(U_k),
\end{equation}
for $\alpha \leq 2$. Moreover, for $X_k$ uniform on any set of integers, $X_k^{\#}$ has a uniform distribution on an integer interval and hence is log-concave, thus $X_k^{\#} \sim U_k$ for $U_k$ uniform on an integer interval, therefore
\begin{equation*}
\Delta_\alpha \left( \sum_{k=1}^n X_k \right) \geq \Delta_\alpha \left( \sum_{k=1}^n X_k^{\#} \right) \geq \sum_{k=1}^n \Delta_\alpha \left( X_k^{\#} \right) = \sum_{k=1}^n \Delta_\alpha(X_k),
\end{equation*}
where the first inequality comes from Theorem \ref{rearrance-2} together with Schur concavity of R\'enyi entropy, and the second inequality from \eqref{bound-unif}.
\end{proof}

\begin{remark}
    \begin{enumerate}
        \item Let us note that \eqref{bound-unif} implies Theorem \ref{main2} with $c=12$ and $\alpha \in [0,2]$ when the random variables are uniformly distributed.

        \item Employing the relation
$$ H_{\alpha}(X) \leq H_{\infty}(X) + \log(\alpha^{\frac{1}{\alpha - 1}}) $$
obtained in \cite{MT}, which is valid for all log-concave distributions and $0 < \alpha < \infty$, we deduce by taking $\alpha =2$ that
$$ N_{\infty} \left( \sum_{k=1}^n U_k \right) \geq \frac{1}{4} N_{2} \left( \sum_{k=1}^n U_k \right) \geq \frac{1}{4} + 3  \sum_{k=1}^n \Var(U_k), $$
where the second inequality comes from \eqref{unif}. Equivalently,
$$ M \left( \sum_{k=1}^n U_k \right) \leq \frac{1}{\sqrt{\frac{1}{4} + 3  \sum_{k=1}^n \Var(U_k)}}, $$
which is an improvement of Theorem \ref{main1} whenever the random variables are uniformly distributed on at least 3 points.
    \end{enumerate}
\end{remark}

\iffalse

\vskip1cm

\noindent Arnaud Marsiglietti \\
Department of Mathematics \\
University of Florida \\
Gainesville, FL 32611, USA \\
E-mail: a.marsiglietti@ufl.edu

\vspace{0.8cm}

\noindent James Melbourne \\
Probabilidad y Estad\'isticas\\
Centro de Investigaci\'ones en Matem\'aticas \\
Guanajuato, GTO 36023, MX \\
E-mail: james.melbourne@cimat.mx

\fi

\end{document}